\newcommand{\reals}{\mathbb{R}}
\newcommand{\irange}[1][k]{[#1]}
\newcommand{\grp}{\mathcal{G}}
\newcommand{\gre}{\mathcal{E}}
\DeclareMathOperator*{\argmin}{arg\,min}
\newcommand{\makesmalleroperator}[1]{%
  \expandafter\let\csname saved\string#1\endcsname#1% save the old command
  \def#1{\mathop{\small@mathchoice#1}}% redefine it
}
\def\small@mathchoice#1{%
  \mathchoice{\textstyle\@nameuse{saved\string#1}}%
             {\@nameuse{saved\string#1}}%
             {\@nameuse{saved\string#1}}%
             {\@nameuse{saved\string#1}}%
}
\def\bbordermatrix#1{\begingroup \m@th
  \@tempdima 4.75\p@
  \setbox\z@\vbox{%
    \def\cr{\crcr\noalign{\kern2\p@\global\let\cr\endline}}%
    \ialign{$##$\hfil\kern2\p@\kern\@tempdima&\thinspace\hfil$##$\hfil
      &&\quad\hfil$##$\hfil\crcr
      \omit\strut\hfil\crcr\noalign{\kern-\baselineskip}%
      #1\crcr\omit\strut\cr}}%
  \setbox\tw@\vbox{\unvcopy\z@\global\setbox\@ne\lastbox}%
  \setbox\tw@\hbox{\unhbox\@ne\unskip\global\setbox\@ne\lastbox}%
  \setbox\tw@\hbox{$\kern\wd\@ne\kern-\@tempdima\left.\kern-\wd\@ne
    \global\setbox\@ne\vbox{\box\@ne\kern2\p@}%
    \vcenter{\kern-\ht\@ne\unvbox\z@\kern-\baselineskip}\,\right.$}%
  \null\;\vbox{\kern\ht\@ne\box\tw@}\endgroup}
\newtheorem{remark}[theorem]{Remark}
\DeclareMathOperator*{\trace}{tr}
\newcommand{\dev}[1]{\textcolor{red}{\texttt{#1}}}
\newcommand{\revv}[1]{#1}
\newcommand{\devv}[1]{}
\newcommand{\dele}[1]{}
\newcommand{\comm}[1]{} 
\newcommand{\revf}[1]{#1}
\newcommand{\rev}[1]{#1}
\newcommand{\revs}[1]{#1}
\newcommand{\revn}[1]{#1}
\newcommand{\margn}[1]{}
\newcommand{\mstar}{M^{\star}}
\newcommand{\rowof}[2]{#1_{#2,:}}
\newcommand{\greg}{GraphReg}
\newcommand{\freg}{FrobReg}
\newcommand{\noreg}{Unregularized} 
\newcommand{\cmark}{\ding{51}}%
\newcommand{\bcheck}{{\color{black}\cmark}}
\title{Alternating minimization algorithms for graph regularized tensor completion}
\author{Yu Guan\thanks{Huawei European Research Institute (ricky7guanyu@gmail.com)}
\and Shuyu Dong\thanks{LISN, INRIA, Universit\'e Paris-Saclay (shuyu.dong@inria.fr)}
\and Bin Gao\thanks{LSEC, Academy of Mathematics and Systems Science, Chinese Academy of Sciences (gaobin@lsec.ac.cc.cn)}
\and P.-A. Absil\thanks{Department of INMA, Universit\'e catholique de Louvain (pa.absil@uclouvain.be)}
\and François Glineur\thanks{Department of INMA, Universit\'e catholique de Louvain (francois.glineur@uclouvain.be)}}
\begin{document}

\maketitle

% \noindent\commr{Draft version: \today~\\}  

\begin{abstract}
{\it We consider a Canonical Polyadic (CP) decomposition approach to low-rank tensor completion (LRTC) by incorporating external pairwise similarity relations through graph Laplacian regularization on the CP factor matrices. {The usage of graph regularization entails benefits in the learning accuracy of LRTC, but at the same time, induces coupling graph Laplacian terms that hinder the optimization of the tensor completion model.} 
{In order to solve graph-regularized LRTC, we propose efficient alternating minimization algorithms by leveraging the block structure of the underlying CP decomposition-based model. For the subproblems of alternating minimization, a linear conjugate gradient subroutine is specifically adapted to graph-regularized LRTC. 
Alternatively, we circumvent the complicating coupling effects of graph Laplacian terms by using an alternating directions method of multipliers.} 
Based on the Kurdyka-{\L}ojasiewicz property, we show that the sequence generated by the proposed algorithms globally converges to a critical point of the objective function. Moreover, the complexity and convergence rate are also derived. 
In addition, numerical experiments including synthetic data and real data {show that the graph regularized tensor completion model has improved recovery results compared to those without graph regularization, and that the proposed algorithms achieve gains in time efficiency over existing algorithms.} 
}
\end{abstract}

\begin{keywords}
Tensor completion, graph Laplacian regularization, alternating minimization, %
alternating direction method of multiplier, 
Kurdyka-{\L}ojasiewicz property
\end{keywords}

\begin{AMS}
   15A69, 49M20, 65B05, 90C26, 90C30, 90C52 
\end{AMS}

\section{Introduction}\label{sec:introduction}
\rev{Matrix and tensor (also known as multidimensional arrays) completion arise in many areas such as signal processing for EEG data~\cite{morup2006parallel} and MRI (magnetic resonance imaging)~\cite{banco2016sampling}, \revn{genetic} data analysis \cite{li2021imputation} and 
image and video restoration~\cite{bertalmio2000image,liu2012tensor}. 
In these applications, the data matrix or data tensor is often only partially observed, undersampled, or sampled with noise; matrix or tensor completion is an abstraction of the problem of recovering such data. While it is unlikely to recover the missing data if the hidden matrix or tensor is unstructured, it is shown that matrix completion can indeed be solved when the hidden matrix has a low rank~\cite{candes2009exact,recht2010guaranteed}, 
and instead of the matrix rank, the matrix nuclear norm is a convex relaxation that guarantees exact solutions to the low-rank matrix completion problem~\cite{Mazumder2010}.} %
Generalizing from the matrix case to the tensor case, 
several works~\cite{gandy2011tensor,liu2012tensor} 
extended the matrix nuclear norm-based regularization to the tensor completion problem. Liu et al.~\cite{liu2009tensor} introduced an extension of the matrix nuclear norm to the low-rank tensor completion (LRTC) problem and later defined the nuclear norm of a tensor as a convex combination of nuclear norms of its unfolding matrices~\cite{liu2012tensor}. Given an $k$-th order tensor $\mathcal{T}\in\mathbb{R}^{m_{1}\times\ldots\times m_{k}}$, the nuclear norm-based tensor completion model is as follows, 
\begin{equation}\label{origin}
\min_{\mathcal{Z}\in\mathbb{R}^{m_{1}\times\ldots\times m_{k}}} \quad\frac{1}{2}\|\mathcal{P}_{\Omega}(\mathcal{T}-\mathcal{Z})\|_{\mathrm{F}}^{2}+\sum_{i=1}^{k}\lambda_{i}\|\mathcal{Z}_{(i)}\|_{*},
\end{equation}
where $\mathcal{P}_{\Omega}$ is the projection operator that only retains the revealed entries of $\mathcal{T}$, recorded in the index set $\Omega\subset\irange[m_1]\times\cdots\times\irange[m_k]$, and $\|\mathcal{Z}_{(i)}\|_*$ denotes the matrix nuclear norm of the mode-$i$ matricization %
(\Cref{sec:preliminaries}) of $\mathcal{Z}$. The nuclear norm terms~$\|\mathcal{Z}_{(i)}\|_{*}$ in~\eqref{origin} \rev{promote solutions of $\mathcal{Z}$ with low-rank matricizations}. 
\rev{However, the model~\eqref{origin} has a memory requirement of $O(m_1\dots m_k)$, and the nuclear norm term in~\eqref{origin} involves subdifferential computations that require singular value decomposition of the unfolding matrices~$\mathcal{Z}_{(i)}$, 
which \revn{can be} very large since their sizes $(m_i\times \Pi_{j\neq i}m_j$) grow quickly with the tensor size $(m_1,\dots,m_k)$}. %at each iteration. 
Therefore, \rev{many LRTC approaches use low-rank tensor decompositions such as 
the Tucker decomposition and the Canonical Polyadic (CP) decomposition % 
to systematically limit the number of parameters of the tensor model~\cite{jain2014provable,zhou2017tensor,nimishakavi2018dual,lacroix2018canonical,dong2022new}. 
Other decomposition-based approaches to the LRTC problem include hierarchical tensor representations~\cite{da2013hierarchical,rauhut2015tensor}, 
tensor train decomposition~\cite{steinlechner2016riemannian,cai2022tensor} and tensor ring decomposition~\cite{wang2017efficient,gao2023riemannian}.}  

\rev{In addition to low-rankness, auxiliary similarity information about the inter-relations between data entries, also called side information, %
is also an important source for refining the solutions of data recovery problems such as matrix and tensor completion. 
For matrix completion, auxiliary information is used in the form of graph regularization \cite{srebro2010collaborative,zhou2012kernelized,rao2015collaborative}, which is typically in the following form~\cite{rao2015collaborative}, for a partially observed matrix $M\in\mathbb{R}^{m_1\times m_2},$} 
\begin{equation}\label{rao}
	\min_{X, Y}\|\mathcal{P}_{\Omega}(M - XY^{\top})\|_{\mathrm{F}}^{2}+\trace(X^{\top}\mathcal{L}^{(1)} X) + \trace(Y^{\top} \mathcal{L}^{(2)} Y) 
\end{equation}
\rev{where $\mathcal{L}^{(1)}$ is %
a graph Laplacian matrix of a graph encoding certain row-wise similarities of the data entries in $M$, and $\mathcal{L}^{(2)}$, likewise, is a graph Laplacian matrix for the column-wise similarities.}
\rev{Depending on the specific domain of application, the graph structures needed for graph regularization are either directly related to the problem background, such as traffic network of highways for road traffic prediction \cite{lan2022dstagnn}, or they can be constructed from auxiliary information, such as user communities or item similarities for online recommendation tasks~\cite{rao2015collaborative,dong2021riemannian}.} 
\rev{The usage of the graph Laplacian-based functions in \eqref{rao} is motivated by the following reason. Given an undirected graph $\grp^{(1)}$ on the row index set of $X$ and a weighted graph adjacency matrix $W\in\reals^{m_1\times m_1}$ associated with $\grp^{(1)}$, the Laplacian matrix of $(\grp^{(1)}, W)$, defined as  
$\mathcal{L}^{(1)} = \text{diag}(W\mathbf{1}) - W$, 
has the following property~\cite[Section 1.4]{chung1997spectral}, %
\begin{align}\label{eq:flf}
\trace (X^{\top}\mathcal{L}^{(1)} X) 
= \sum_{i=1}^{m_1}\sum_{j=1}^{m_1}W_{ij}\|X_{i,:}-X_{j,:}\|_{2}^{2}.
\end{align}
From the form of the weighted sum of squared row-wise differences in \eqref{eq:flf}, one can see that minimizing~\eqref{eq:flf}, subject to a constraint on $X$ for a certain data fitting objective, promotes solutions that are piecewise smooth on the graph $\grp^{(1)}$\revn{~\cite{shuman2013emerging}}.} 
\rev{More generally, the graph Laplacian-based function \eqref{eq:flf} is a ubiquitous tool in data analysis~\cite{coifman2005geometric}, graph signal processing~\cite{shuman2013emerging}, 
semi-supervised learning~\cite{ando2006learning}, and image restoration~\cite{pang2017graph,cheung2018graph,zeng2019deep}. 
} 

\rev{In the context of tensor completion, 
Narita et al. \cite{narita2012tensor} considered using auxiliary graph information to regularize tensor completion solutions. They proposed tensor factorization models involving graph Laplacian regularizers for the completion of third-order tensors. 
The graph Laplacian regularization in \cite{narita2012tensor} is designed in two ways: the first way called {\it cross-mode} is through a graph Laplacian-based norm of the Kronecker product of the three tensor factor matrices, and the second way called {\it within-mode} is through the sum of three graph Laplacian-based norms of the respective factor matrices. 
Other related works on tensor completion are summarized in \Cref{tab:relatedwork-b}, which will be further explained in \Cref{ssec:priorwork} after the necessary notation for tensors is introduced. 
}

\begin{table}[!h]
\footnotesize
\centering
\caption{Comparison with related work on tensor completion. 
The column `$k$ (order)' refers to applicability to $k$-th order tensors. The column `Cost' refers to the dominant per-iteration cost. The column `Convergence' refers to convergence result for the proposed method. `--' means not available, and the check mark means the contrary. NCG refers to the nonlinear conjugate gradient method. NMF refers to nonnegative matrix factorization, and `*' means additional nonnegative constraints.} 
\label{tab:relatedwork-b}
\begin{tabular}{c|ccrlcc} 
\hline
                                & Model type              & Optimization       & $k$ (order) & \multicolumn{1}{c}{{Cost}}                        & Graph reg. & Convergence  \\ 
\hline
INDAFAC~\cite{tomasi2005parafac}    & \eqref{prog:lrtc-a} & Gauss-Newton      & $3$     & $O((m_1m_2m_3)^3)$              & --         & --            \\ 
CP-WOPT~\cite{acar2011scalable}     & \eqref{prog:lrtc-a} & NCG & $\geq3$   &
\multicolumn{1}{c}{--}   & --   & -- \\
BPTF~\cite{xiong2010temporal}       & \eqref{prog:lrtc-a} & Bayesian, MCMC    & $3$     & $O(|\Omega|R^2)$                & --         & --            \\
TNCP~\cite{liu2014trace}            & \eqref{prog:lrtc-b} & ADMM              & $\geq3$ & $O((k+1)R\Pi_{i=1}^k m_i)$      & --         & \bcheck       \\
TFAI~\cite{narita2012tensor}        & \eqref{prog:lrtc-b} & EM-like, NCG      & $3$     & $O((|\Omega|+\text{nnz}(L))R)$  & \bcheck    & --            \\
AirCP~\cite{ge2016uncovering}       & \eqref{prog:lrtc-b} & ADMM              & $3$     & \multicolumn{1}{c}{--}                              & \bcheck    & \bcheck       \\
\rev{FIST~\cite{li2021imputation}} &~~\eqref{prog:lrtc-a}* & NMF & $3$ & $O((|\Omega|+\sum_{i=1}^{3}m_i^2))R)$ & \bcheck & -- \\ 
Ours                                & \eqref{prog:lrtc-a} & AltMin, ADMM      & $\geq3$ & $O((|\Omega|+\text{nnz}(L))R)$  & \bcheck    & \bcheck       \\ 
\hline
\end{tabular}
\end{table}
\paragraph*{\bf Contribution} 
In this paper, \rev{we address the tensor completion problem for $k$-th order tensors by considering a CP decomposition model~\eqref{problem} that incorporates auxiliary information via graph Laplacian regularization in the form of \eqref{eq:flf}.} 
\rev{The underlying CP decomposition, similar to the matrix problem~\eqref{rao} and the graph regularized problem with third-order tensors~\cite{narita2012tensor}, has a block structure such that the search of full $(m_1,\dots, m_k)$-tensors can be divided into a sequence of $k$ smaller subproblems on the CP factor spaces. 
Taking the graph Laplacian regularizers into account, each subproblem is formulated explicitly into a least-squares problem.
Then we propose an alternating minimization   algorithm (AltMin) for optimizing the graph-regularized tensor completion model. 
An efficient Hessian-vector multiplication scheme is adapted to a linear conjugate gradient (CG) method for solving the subproblems in the alternating minimization procedure.}  
We provide a proof for the convergence of iterates of the proposed AltMin algorithm to a critical point of the objective function according to the Kurdyka-{\L}ojasiewicz (K{\L}) property. 

\rev{Notice that the graph Laplacian regularization induces a coupling effect on the Hessian coefficients of each subproblem in AltMin, which complicates the resolution of the underlying least-squares problem. Therefore we consider variable splitting for the graph-regularized tensor completion model alternatively. More precisely, by splitting the CP factors in the data fitting term and the graph regularization term, we propose an alternating direction method of multipliers (ADMM), which decouples the Hessian coefficients into two splitting subproblems. 
One of the resulting subproblems involves a graph Laplacian term while the other does not, hence the latter one can be solved in parallel.}

We conduct tensor completion experiments % 
on both synthetic and real data and show that the proposed algorithms \rev{entail improved recovery results by using appropriate auxiliary graph information}. 
\rev{The graph Laplacian regularization shows significant improvement on unregularized and nuclear norm-regularized models,} especially when the fraction of revealed data is small. 
\rev{The two proposed algorithms also show speedups over several baseline methods on the synthetic and real data experiments.}

\paragraph*{\bf Organization} 
We organize this paper as follows. We begin with the introduction of notations and definitions in Section~\ref{sec:preliminaries}, and \rev{we introduce the LRTC model with a graph Laplacian-based regularizer and then present several related work for CP decomposition-based tensor completion including existing methods using graph regularization.} 
In Section \ref{sec:algorithm}, an alternating minimization algorithm using linear CG for solving the subproblems is proposed; an ADMM algorithm is also developed for solving the graph-regularized LRTC problem.  
Convergence analysis of the AltMin algorithm is given in
Section~\ref{sec:convergence}. Numerical experiments together with some
interesting observations are presented in Section~\ref{sec:numerical}. Conclusion is shown in Section \ref{sec:conclusion}.

\section{Preliminaries and problem setting}\label{sec:preliminaries} 
In this section, we introduce the definition and notation of some tensor operations, and \revs{set up the target problem}. 
\revs{A real-valued order-$k$ tensor  
$\mathcal{Z} \in {\mathbb{R}^{m_{1}\times m_{2}\times\ldots\times m_{k}}}$ is a \revn{multiway} array in which each entry, denoted as $\mathcal{Z}_{\ell_{1},\ldots,\ell_{k}}$, is accessed via $k$ indices $(\ell_{1},\ldots,\ell_{k}) \in [m_1]\times\dots\times [m_k]$, 
where $[m_i]$ denotes the index set $\{1,\ldots,m_i\}$ for the integer $m_i$. 
The vector of all \revn{ones} is denoted as $\mathbf{1}$ and the vector with one on the $i$-th \revn{entry} and \revn{zeros} elsewhere is denoted as $\mathbf{e}_i$. An $m\times m$ identity matrix is denoted as $I_{m}$. 
}

\rev{For $k\geq 2$, the outer product of $k$ vectors $\{u^{(1)},\dots, u^{(k)} \}$ is \revn{a $k$-th} order tensor, denoted as $\mathcal{Z}:=u^{(1)}\circ \dots \circ u^{(k)}$, such that $\mathcal{Z}_{\ell_1,\dots,\ell_k} = u^{(1)}_{\ell_1} \dots 
 u^{(k)}_{\ell_k}$.}  
The {\it Kronecker product} of two vectors 
\revs{$u\in\mathbb{R}^{m_{1}}$ and $v\in\mathbb{R}^{m_{2}}$} 
results in a vector 
\revs{${u}\otimes {v}\in\mathbb{R}^{m_{1}m_{2}}$} defined as
\revs{${u}\otimes {v} =(u_{1}{v}^{\top}, u_{2}{v}^{\top} \cdots u_{m_{1}}{v}^{\top})^{\top}$.} 
More compactly, we have 
\revs{$({u}\otimes{v})_{m_{2}(i-1)+j}=u_{i}v_{j}$ for $(i,j)\in [m_1]\times [m_2]$.}
\rev{The Kronecker product of two matrices $A\in\mathbb{R}^{m\times n}$ and $B\in\mathbb{R}^{p\times q}$ is the $pm\times qn$ matrix 
$$
A\otimes B = \begin{pmatrix} a_{11}B & \dots & a_{1n}B \\
\vdots & \ddots & \vdots \\
a_{m1}B & \dots & a_{mn}B \end{pmatrix}.
$$ 
The {\it Khatri-Rao product} $U \odot V$ of 
two matrices $U\in \mathbb{R}^{m_{1} \times R}$ and $V\in \mathbb{R}^{m_{2}\times R}$ with the same number of columns is a matrix of size $m_{1}m_{2} \times R$ whose $r$-th column is $U_{:,r} \otimes V_{:,r}$.} 

For a tensor
$\mathcal{Z}\in {\mathbb{R}^{m_{1}\times m_{2}\times\ldots\times m_{k}}}$, 
the {\it mode-$i$ matricization} $\mathcal{Z}_{(i)}$ is the $m_{i}\times (\prod_{j\neq i}m_{j})$ unfolding \rev{matrix} of $\mathcal{Z}$ along its $i$-th mode. 
\revs{The matricization $\mathcal{Z}_{(i)}$ satisfies the following identification of the matrix entry with the \revn{tensor entry:} $(\mathcal{Z}_{(i)})_{\ell_{i}, r_{i}} = \mathcal{Z}_{\ell_{1},\ldots ,\ell_{k}}$ where} 
\begin{equation}\label{eq:ind-mati}
r_{i}=1+\sum^{k}_{\substack{n=1\\ n \neq i}}(\ell_{n}-1)I_{n}, \quad \text{with~} \quad I_{n}=\prod^{n-1}_{\substack{j=1\\ j \neq i}}m_{j}.
\end{equation}

\revn{A} {\it Canonical Polyadic (CP) decomposition}~\cite{hitchcock1927expression,kruskal1977three,Kolda2009} %
of a tensor $\mathcal{Z}\in\reals^{m_1\times\cdots\times m_k}$ is defined and denoted as
\begin{equation}\label{cp_decomposition}
\revs{\mathcal{Z}=\llbracket U^{(1)},\ldots,U^{(k)}\rrbracket
=\sum_{r=1}^{R}U_{:,r}^{(1)}\circ \dots \circ U_{:,r}^{(k)}}, 
\end{equation}
\revs{where $U^{(i)}\in\mathbb{R}^{m_{i}\times R}$ for $i=1,\ldots,k$ and 
$R\in\mathbb{Z}$ are CP factor matrices and a rank parameter respectively satisfying \eqref{cp_decomposition}.} %
An equivalent CP form can be written as
\begin{align} \label{eq:unfold}
\mathcal{Z}_{(i)} = U^{(i)}(U^{(k)}\odot \ldots \odot U^{(i+1)} \odot U^{(i-1)}\odot \ldots \odot U^{(1)})^{\top}
\revs{=U^{(i)}((U^{(j)})^{\odot_{j\neq i}})^{\top},} 
\end{align}
where $\mathcal{Z}_{(i)}$ is the mode-$i$ tensor matricization.
\revs{Given an index $i \in [k]$, the subscript or superscript $(-i)$ means that the notion in question depends on the subset (or filtering) $[k]\backslash \{i\}$. Hence the following notations, $m_{(-i)}:=\prod_{j\neq i}m_j$ and $U^{(-i)}:= (U^{(j)})^{\odot_{j\neq i}}\in\reals^{m_{(-i)}\times R}$, are used for brevity whenever needed. 
}

\rev{The Frobenius inner product of two matrices $A,B$ of the same size, by definition, is denoted by $\langle A, B\rangle = \trace(A^{\top}B)$ interchangeably.} 
The {\it tensor inner product} of two tensors $\mathcal{Z}^{(1)},\mathcal{Z}^{(2)}\in {\mathbb{R}^{m_{1}\times m_{2}\times\ldots\times m_{k}}}$ %
of the same size is defined as 
\begin{align*}
    \langle \mathcal{Z}^{(1)},\mathcal{Z}^{(2)} \rangle=\sum_{\ell_{1}=1}^{m_{1}}\cdots \sum_{\ell_{k}=1}^{m_{k}}\mathcal{Z}^{(1)}_{\ell_{1}\ldots \ell_{k}}\mathcal{Z}^{(2)}_{\ell_{1}\ldots \ell_{k}}.
\end{align*}
The Frobenius norm of a tensor $\mathcal{Z}\in {\mathbb{R}^{m_{1}\times m_{2}\times\ldots\times m_{k}}}$, \rev{same as the matrix notation, is defined and denoted as 
$\|\mathcal{Z}\|_{\mathrm{F}}=\sqrt{ \langle \mathcal{Z},\mathcal{Z} \rangle}$.}

\subsection{Problem setting}\label{sec:problem}
We introduce our LRTC model 
in the form of CP decomposition %
with a graph Laplacian-based regularization, 
\revs{
\begin{align}\label{problem}
    \underset{U^{(1)},\ldots,U^{(k)}}{\text{min}} 
\frac{1}{2}\|\mathcal{P}_{\Omega}(\mathcal{T}-\llbracket U^{(1)},\cdots, U^{(k)}\rrbracket)\|_{\mathrm{F}}^{2}
+ \sum_{i=1}^{k}\frac{{\lambda_{i}}}{2}\trace\big((U^{(i)})^{\top} L^{(i)} U^{(i)}\big) 
+ \sum_{i=1}^{k}\frac{\lambda_{i}}{2}\|(U^{(j)})^{\odot_{j\neq i}}\|_{\mathrm{F}}^{2}
\end{align}
where 
$U:=(U^{(1)},\ldots,U^{(k)}) \in\mathbb{R}^{m_{1}\times R}\times \cdots \times \mathbb{R}^{m_{k}\times R}$ for a (strictly positive) rank parameter $R$,} 
\revs{$\Omega$ denotes the index set of} the revealed tensor entries
($\Omega\subset [m_1]\times\cdots\times[m_k]$) and $\mathcal{T}$ is the tensor
that is revealed only on~$\Omega$. The proportion of the \revn{revealed entries} %known entries 
$\frac{|\Omega|}{m_1...m_k}$ % 
is referred to as the \emph{sampling rate}. % 
The shifted graph Laplacian $L^{(i)}$ is defined as  
\begin{align}
L^{(i)}=\lambda_{L}\mathcal{L}^{(i)}+I_{m_{i}} \label{eq:def-Li} \quad\text{with}\quad
\mathcal{L}^{(i)}= \text{diag}(W^{(i)}\mathbf{1}) - W^{(i)}, 
\end{align} 
where ${W}^{(i)}$ is a \revs{weighted adjacency matrix of a given graph $\grp^{(i)}$ on the index set $[m_i]$, i.e., $W^{(i)}_{\ell_1,\ell_2} \neq 0$ if and only if $(\ell_1,\ell_2)$ is an (undirected) edge of $\grp^{(i)}$.}  
\revn{The Frobenius norm-based terms in the regularizer of~\eqref{problem} (when $\lambda_i >0$ and $\lambda_L = 0$) are related to the nuclear norm of the matricizations of $\mathcal{Z}$ 
through the following characterization~\cite{srebro2005maximum}: 
\begin{align}
    \label{eq:nuclearnorm-chara}
	 \|\mathcal{Z}_{(i)}\|_{*}= \min_{U^{(i)}\big((U^{(j)})^{\odot_{j\neq i}}\big)^{\top}=\mathcal{Z}_{(i)}}\frac{1}{2}\{\|U^{(i)}\|_{\mathrm{F}}^{2}+\|(U^{(j)})^{\odot_{j\neq i}}\|_{\mathrm{F}}^{2}\}.
\end{align}
In the general case ($\lambda_i>0$ and $\lambda_L>0$), the regularizer of~\eqref{problem} can be seen as a generalized form of the nuclear norm~\eqref{eq:nuclearnorm-chara}, which entails a generalization of the tensor nuclear norm.} 

The parameters $\lambda_{i}$ and $\lambda_{L}$ control the
trade-off between 
the regularization term and 
the data fitting term. 
\rev{More specifically, the graph Laplacian-based regularization term, as explained in %
\eqref{eq:flf}, 
has the effect of favoring solutions that tend to be piecewise smooth with respect to the graph links in $\grp^{(i)}$. 
Therefore, the regularization of \eqref{problem} (for $\lambda_L>0$) induces a
trade-off between data fitting on the revealed entries and piecewise smoothness
according to the given graph across the whole index set.} 
In particular, when $\lambda_{L} =0$, problem~\eqref{problem} reduces to a
Frobenius norm-regularized tensor completion model (for $\lambda_i>0$) or an
unregularized model (for $\lambda_i=0$).

\subsection{\rev{Related work}} 
\label{ssec:priorwork}
Among the prior work on LRTC using CP decomposition, 
we give a representative list of methods, including ours, \rev{that deal with either of the following two tensor completion models (while some methods are limited to third-order tensors): 
\begin{align}
    & \min_{U}\|P_{\Omega}(\llbracket U^{(1)}, \dots, U^{(k)}\rrbracket-\revn{\mathcal{T}})\|_{\mathrm{F}}^{2} + \psi(U);  \label{prog:lrtc-a} \\ 
    & \min_{\revn{\hat{\mathcal{T}}},U}\|\revn{\hat{\mathcal{T}}}-\llbracket U^{(1)}, \dots, U^{(k)}\rrbracket\|_{\mathrm{F}}^{2}+\psi(U)\quad \text{subject to}\quad \revn{\mathcal{P}_{\Omega}(\hat{\mathcal{T}})=\mathcal{P}_{\Omega}(\mathcal{T})},  \label{prog:lrtc-b}  
\end{align}
where $\psi$ is a (possibly zero-valued) regularizer. 
}

\rev{\revn{A comparison} with existing methods is given in~\Cref{tab:relatedwork-b}. Our work considers \eqref{problem} with general $k$-th order tensors, which is of type \eqref{prog:lrtc-a} where the graph Laplacian-based regularizer~\eqref{eq:flf} is enabled. The proposed algorithms (AltMin and ADMM) have per-iteration costs less than or comparable to others, and the convergence property of AltMin is given.} 
INDAFAC is a damped Gauss-Newton method proposed by Tomasi and
Bro~\cite{tomasi2005parafac} for solving %
\rev{\eqref{prog:lrtc-a} with $\psi=0$.} 
% % 
CP-WOPT is an algorithm by Acar et al.~\cite{acar2011scalable}  
for solving % 
\rev{\eqref{prog:lrtc-a} with $\psi=0$,} 
\revv{and is available} in the Tensor Toolbox~\cite{bader2012matlab}. 
BPTF is a Bayesian probabilistic tensor CPD
algorithm by Xiong et al.~\cite{xiong2010temporal} 
\rev{for solving \eqref{prog:lrtc-a}} 
where the regularizer $\psi$ is composed of Frobenius norms of the factors of
$U$ and an $\ell_2$ norm-based function imposing columnwise smoothness of
$U^{(3)}$. 
TFAI is an algorithm \revn{for optimizing} the auxiliary-information model of Narita et al.~\cite{narita2012tensor}, 
\rev{which corresponds to \eqref{prog:lrtc-b}} 
where $\psi$ is a graph Laplacian-based regularizer (in the within-mode) as in~\eqref{problem}. 
TNCP is an ADMM algorithm by Liu et al.~\cite{liu2014trace} for solving a matrix trace-norm regularized problem, 
\rev{which is transformed into the form of \eqref{prog:lrtc-b} where $\psi(U)= \sum^{3}_{i=1}\alpha_{i}\|U^{(i)}\|_{*}$.} 
AirCP is an ADMM algorithm by Ge et al.~\cite{ge2016uncovering} for \rev{solving 
\eqref{prog:lrtc-b} 
where $\psi$ takes the form of $\psi(U,X)$---under equality constraints $U^{(i)}=X^{(i)}$---and is composed of the sum of Frobenius norms of $U^{(i)}$ and the graph Laplacian-based norms of $X^{(i)}$.} 
\rev{FIST is a nonnegative matrix factorization (NMF) method by Li et al.~\cite{li2021imputation} for solving \eqref{prog:lrtc-a} with additional nonnegative constraints on all three CP factors $U^{(i)}$, where $\psi$ is the graph Laplacian-based function \eqref{eq:flf} of the vectorization of the (third-order) candidate tensor.} 
\section{Algorithms}\label{sec:algorithm} 

In this section, we introduce an alternating minimization (AltMin) algorithm and
an ADMM algorithm for solving the LRTC problem~\eqref{problem}.

\subsection{Alternating minimization}\label{ssec:alg-altmin-cg}

To minimize the objective function of~\eqref{problem}, defined on the product space of %tensor factors 
\rev{CP factors} 
$\reals^{m_1\times R}\times\cdots\times\reals^{m_k\times R}$, alternating minimization (also referred to as block coordinate descent)  consists in minimizing the function cyclically over each factor matrix among $(U^{(1)},\ldots,U^{(k)})$  while keeping the remaining variables fixed at their last updated values. 

\begin{algorithm}[htbp] 
    \caption{Alternating minimization (AltMin) for solving~\eqref{problem}}\label{algo:altmin}
    \hspace*{0.02in} {\bf Input:} Data (\revn{revealed} on $\Omega$) $\mathcal{P}_\Omega(\mathcal{T})\in\mathbb{R}^{m_{1}\times\ldots \times m_{k}
        }$, observed set $\Omega$. Objective function $f$\\
        \hspace*{0.02in} {\bf Output:} $(U_{t}^{(i)})_{i=1,\ldots,k}$
    \begin{algorithmic}[1]
        \State{Initialization: $U_{0}^{(1)},\ldots,U_{0}^{(k)}$} 
        \For{$t=0,1,2,\ldots$}\label{algo:altmin-init}
        \If{stopping criterion is satisfied}\label{line:altmin-stoppcrit}
        \State return;
        \EndIf{}
        \For{$i=1,\ldots,k$}
        \State{ $U_{t+1}^{(i)} = \arg\min_{U^{(i)}\in\mathbb{R}^{m_i\times R}} f_{t+1}^{(i)}(U^{(i)})$
        \hfill \devv{See Algorithm~\ref{alg:Altmin_CG} or Algorithm~\ref{algorithm_ADMM}}}
        \EndFor
        \EndFor
    \end{algorithmic}
\end{algorithm}

Let $f(\cdot)$ denote the objective function of~\eqref{problem}. \revs{We consider the optimization problem on the product space, i.e., $\min_{U^{(1)},\ldots,U^{(k)}} f(U^{(1)},\ldots,U^{(k)})$. \revn{Given the $t$-th iterate $(U_t^{(1)},\ldots,U_t^{(k)})$, the (cyclic)} alternating minimization resorts to solving the following sequence of subproblems for $i=1,\dots,k$,}
\begin{equation}\label{subequation}
\revs{\min_{U^{(i)}\in\reals^{m_i\times R}}f_{t+1}^{(i)}(U^{(i)}) := f(U_{t+1}^{(1)},\ldots,U_{t+1}^{(i-1)},U^{(i)},U_{t}^{(i+1)},\ldots,U_{t}^{(k)}),}
\end{equation}
where $f_{t+1}^{(i)}$ denotes the objective function of the subproblem in $U^{(i)}$. \revs{The procedure of alternating minimization is listed in~\Cref{algo:altmin}.}

\paragraph*{\bf Alternating subproblems} Due to the graph Laplacian-based regularization term, the major challenge in
solving~\eqref{problem} by the alternating minimization procedure is the
structure of each subproblem~\eqref{subequation}, which is different from those of an
unregularized tensor decomposition problem. 
\revs{Specifically,} during the $(t+1)$-th iteration and for $i\in\irange[k]$, 
the minimization~\eqref{subequation} has the following explicit expression\footnote{For
convenience, we ignore the subscript $t+1$ or $t$ in the \revf{variables
$U^{(j)}$ for all $j=1,\dots,k$, and omit constant terms in the objective}.}: %
\begin{equation}\label{one_variable2}
\min_{U^{(i)}\in\reals^{m_i\times R}} 
\revs{
\frac{1}{2}\|\mathcal{P}_{\Omega^{(i)}}\big(\mathcal{T}_{(i)}- U^{(i)}(( U^{(j)})^{\odot_{j\neq i}})^{\top}\big)\|_{\mathrm{F}}^{2}
} 
+\frac{\lambda_{i}}{2}\trace\big( (U^{(i)})^{\top} L^{(i)} U^{(i)} \big) 
+\sum_{\substack{j=1\\ j\neq i}}^{k}\frac{\lambda_{j}}{2}\|(U^{(n)})^{\odot_{n\neq j}}\|_{\mathrm{F}}^{2}
\end{equation}
\revs{where the first term is transformed from the mode-$i$ matricization~\eqref{eq:unfold}, and $\Omega^{(i)}$ is the set of $2$-dimensional indices of the form  
$(\ell_{i},r_{i})$, which is transformed from the tensor index 
$(\ell_{1},\ldots,\ell_{k})\in\Omega$ via~\eqref{eq:ind-mati} after the mode-$i$ matricization %
(\Cref{sec:preliminaries}).
In fact, the subproblem~\eqref{subequation} has a quadratic objective.
} 

\begin{proposition}
    \label{prop:altmin}
    \rev{Let $\mathbf{x}:=\text{vec}((U^{(i)})^{\top})\in\reals^{m_iR}$ be the vectorization of $(U^{(i)})^{\top}$, and let $g^{(i)}(\mathbf{x}) := f_{t+1}^{(i)} (U^{(i)})$ defined in \eqref{subequation}.} 
    Then $g^{(i)}$ is a quadratic function of the following form, 
\begin{align}
g^{(i)}(\mathbf{x}) & :=\frac{1}{2}\mathbf{x}^{\top}M^{(i)}\mathbf{x} - \mathrm{vec}(Q^{(i)})^{\top}\mathbf{x} %
\quad \text{\revs{with}} \label{g_function}\\ 
M^{(i)}&:= A^{(i)} + I_{m_{i}}\otimes C^{(i)} + \lambda_{i}L^{(i)}\otimes I_{R}  %
\label{M}\\
Q^{(i)} &:= (\mathcal{P}_{\Omega^{(i)}}(\mathcal{T}_{(i)}) )(U^{(j)})^{\odot_{j\neq i}}, %
\label{Q} 
\end{align}
\rev{where $A^{(i)}= \sum_{s=1}^{m_i}(\mathbf{e}_s \mathbf{e}_s^{\top})\otimes A^{(i)}_s \in\mathbb{R}^{m_iR\times m_iR}$ is such that the $R\times R$ blocks are 
\begin{align} \label{Aj}
    A^{(i)}_{s}=\sum_{\ell\in\Omega^{(i)}_{s}} \revs{ (U^{(-i)}_{\ell,:})^{\top}U^{(-i)}_{\ell,:}  \quad \text{for~} s \in [m_i] \text{~and~} 
\Omega^{(i)}_{s}=\{\ell: (s, \ell)\in\Omega^{(i)}\}, } 
\end{align}
and $C^{(i)}\in\reals^{R\times R}$ writes 
\begin{align}\label{def:mat-ci}
C^{(i)} = \sum_{\substack{j=1\\ j\neq i}}^{k}\lambda_{j}\mathrm{diag}\big( (\| U^{(-i,-j)}_{:,\ell} \|^2)_{\ell=1,...,R} \big)
\quad \text{for} \quad 
U^{(-i,-j)}:= (U^{(n)})^{\odot_{n\neq i, j}},
\end{align} 
where $(U^{(n)})^{\odot_{n\neq i, j}}$ denotes the Khatri-Rao product of $U^{(n)}$'s excluding $U^{(i)}$ and $U^{(j)}$. 
} 
\end{proposition}
\begin{proof}
\rev{Given the vectorization $\mathbf{x}=\text{vec}((U^{(i)})^{\top})$, the term $L^{(i)}\otimes I_R$ in \eqref{M} is obtained due to the relation
$(B^{\top}\otimes A)\text{vec}(X)=\text{vec}(AXB)$.}  
The components $A^{(i)}$ and $C^{(i)}$ in~\eqref{M} are defined and computed as
follows. 
Let $A^{(i)}\in\reals^{m_i R\times m_i R}$ be the matrix of the quadratic form 
$\mathbf{x}^{T}A^{(i)}\mathbf{x} := \|\mathcal{P}_{\Omega}\big(U^{(i)}\big((U^{(j)})^{\odot_{j\neq i}}\big)^{\top}\big)\|_F^2$ 
in $\mathbf{x}$. 
\rev{Notice that 
\begin{align}
    \mathbf{x}^{T}A^{(i)}\mathbf{x} &:= \|\mathcal{P}_{\Omega^{(i)}}\big(U^{(i)}(( U^{(j)})^{\odot_{j\neq i}})^{\top}\big)\|_{\mathrm{F}}^{2} 
     =\big\langle U^{(i)} (U^{(-i)})^{\top}, P_{\Omega^{(i)}}(U^{(i)} (U^{(-i)})^{\top})\big\rangle \nonumber \\
    &~= \trace\Big( U^{(-i)} {U^{(i)}}^{T} P_{\Omega^{(i)}}\big(U^{(i)} (U^{(-i)})^{\top}\big) \Big) 
     = \sum_{s=1}^{m_{i}} U^{(-i)} U_{s,:}^{\top} P_{\Omega_s^{(i)}}\big(U_{s,:} (U^{(-i)})^{\top}\big) \nonumber\\ 
    &~= \sum_{s=1}^{m_{i}} U_{s,:} (U^{(-i)})^{\top}P_{\Omega_s^{(i)}}(U^{(-i)}) U_{s,:}^{\top} 
    = \sum_{s=1}^{m_{i}} U_{s,:} \revs{\left(\sum_{\ell\in\Omega^{(i)}_{s}}(U^{(-i)}_{\ell,:})^{\top}U^{(-i)}_{\ell,:} \right)} U_{s,:}^{\top} \label{eqn:ai-l3b}
\end{align}
where $U_{s,:}$ denotes the $s$-th row of $U^{(i)}$ %
and $U^{(-i)}:= (U^{(j)})^{\odot_{j\neq i}}$ is used for clarity (see \Cref{sec:preliminaries}).} 
Equation~\eqref{eqn:ai-l3b} holds \revs{because} % 
the projection $P_{\Omega_s^{(i)}}$ (defined on $\reals^{m_{(-i)}}$ by default) applies to each of the columns of $U^{(-i)}$, which has the effect of projecting any row of $U^{(-i)}$ with index $\ell\notin\Omega_s^{(i)}$ to a row of zeros.
Therefore, $A^{(i)}\in \mathbb{R}^{m_{i}R\times m_{i}R}$ is a block diagonal matrix with $m_{i}$ diagonal blocks and each block has the form
\begin{align*}% 
    A^{(i)}_{s}=\sum_{\ell\in\Omega^{(i)}_{s}} \revs{ (U^{(-i)}_{\ell,:})^{\top}U^{(-i)}_{\ell,:}  \quad \text{for~} s \in [m_i], } 
\end{align*}
where $\Omega^{(i)}_{s}=\{\ell: (s, \ell)\in\Omega^{(i)}\}$.

\revs{
Now we verify \eqref{def:mat-ci}. %
The component $I_{m_i}\otimes C^{(i)}$ denotes the matrix related to the %
\revs{third term} in~\eqref{one_variable2}: 
$q(U^{(i)}):=\sum_{j\in[k], j\neq i} \lambda_j \| (U^{(n)})^{\odot_{n\neq j}}\|_F^2$,
which satisfies }
\begin{align*}
q(U^{(i)}) &= \sum_{j\neq i} \lambda_j \sum_{\ell=1}^{R} \| U^{(-i,-j)}_{:,\ell} \otimes U^{(i)}_{:,\ell} \|_2^2
= \sum_{j\neq i} \lambda_j \sum_{\ell=1}^{R}  \underbrace{\| U^{(-i,-j)}_{:,\ell}\|_2^2}_{C_{\ell\ell}^{(i,j)}} \trace\big(U^{(i)}_{:,\ell} (U^{(i)}_{:,\ell})^{\top}\big)\\ %
&= \sum_{j\neq i} \lambda_j \trace\big(U^{(i)} C^{(i,j)} (U^{(i)})^{\top}\big)= \trace\big( U^{(i)} (\sum_{j\neq i} \lambda_j C^{(i,j)}) (U^{(i)})^{\top}\big).\nonumber
\end{align*}
\revs{Finally, since} $\trace(X^{\top} C X) = {\text{vec}(X)}^{\top}(I\otimes C)\text{vec}(X)$, the expression of $C^{(i)}$~\eqref{def:mat-ci} yields the identification 
$q(U^{(i)}) =  \bold{x}^{\top}(I_{m_i}\otimes C^{(i)})\bold{x}$ %
for $\bold{x}=\text{vec}((U^{(i)})^{\top})$. 
\end{proof}

The function $g^{(i)}$ in~\eqref{g_function}, and equivalently $f_{t+1}^{(i)}$ of~\eqref{one_variable2}, is strongly convex (\revs{see the proof of~\Cref{theorem_altmin}}) provided that 
$\lambda_i>0$ \devv{("and $\lambda_L>0$" removed)} for $i=1,\ldots,k$. 
\revs{In fact, solving the minimization~\eqref{one_variable2} 
can be rewritten as} the following quadratic minimization problem %
\begin{equation}\label{prog:sub-lsq}
\min_{\mathbf{x}\in\mathbb{R}^{m_{i}R}} g^{(i)}(\mathbf{x}) =\frac{1}{2}\mathbf{x}^{\top}M^{(i)}\mathbf{x} - \text{vec}(Q^{(i)})^{\top}\mathbf{x}.
\end{equation}
\dele{(more details about why solving the linear system with~\eqref{g_function} is tough, compared to the unregularized tensor decomp)}

\begin{algorithm}[h]
\caption{AltMin-CG for solving (\ref{problem})}\label{alg:Altmin_CG}
\hspace*{0.02in} {\bf Input:} Observed tensor $\mathcal{P}_{\Omega}(\mathcal{T})$, graph Laplacian $\mathcal{L}^{(1)},\ldots, \mathcal{L}^{(k)}$, observed set $\Omega$, parameters  $\lambda_{1},\ldots,\lambda_{k}>0$ and $\lambda_{L}\geq0$ \\
\hspace*{0.02in} {\bf Output:} $(U_{t}^{(i)})_{i=1,\ldots,k}$
\begin{algorithmic}[1]
\State{Initialization: $U_{0}^{(1)},\ldots,U_{0}^{(k)}$}
\For{$t=0,1,2,\ldots$}
\If{stopping criterion is satisfied}
\State return;
\EndIf{}
\For{$i=1,\ldots,k$}
\State{Compute: $C^{(i)},Q^{(i)}$ defined in (\ref{def:mat-ci}), (\ref{Q}) and
$(U^{(j)})^{\odot_{j\neq i}}$\label{algo:altmincg-l1}} 
\State\revn{Get $\mathbf{x}_{t+1}^{(i)}$ by approximately solving $\min_{\mathbf{x}}g^{(i)}(\mathbf{x})$~\eqref{prog:sub-lsq} \hfill \# see Algorithm \ref{alg:CG}\label{algo:altmincg-l2}} 

\State \rev{$U_{t+1}^{(i)}= (\text{unvec}(\mathbf{x}_{t+1}^{(i)}) )^{\top}$}\label{algo:altmincg-l3}
\EndFor
\EndFor
\end{algorithmic}
\end{algorithm}

\revs{We consider linear CG for solving problem~\eqref{prog:sub-lsq}. Algorithm~\ref{alg:Altmin_CG} called AltMin-CG is an adaptation of AltMin (\Cref{algo:altmin}) using linear CG as the subproblem solver. 
In \Cref{alg:Altmin_CG} (line \ref{algo:altmincg-l3}), unvec$(\cdot)$ is the operation of turning the $m_i{R}$-dimensional vector into an ${R}\times m_i$ matrix (in the column-major way).}

\paragraph*{\bf \rev{The linear CG solver}} 
\label{AltMin-CG}
Detailed steps for the linear CG adaptation are given in Algorithm~\ref{alg:CG}. 
\begin{algorithm}[!htbp]
\caption{Linear CG for solving~\eqref{prog:sub-lsq}}
\label{alg:CG}
\hspace*{0.02in} {\bf Input:} $M^{(i)}\in\mathbb{R}^{m_{i}R\times m_{i}R}$, 
$Q^{(i)}\in \reals^{m_i\times R}$, initial point $\mathbf{x}_{0}\in\mathbb{R}^{m_{i}R}$, accuracy parameter $\epsilon$, iteration budget $T_{\max}$\\
\hspace*{0.02in} {\bf Output:} $\mathbf{x}_{t}\in\mathbb{R}^{m_{i}R}$\dele{, $t^{*}$}
\begin{algorithmic}[1]
\State{$\mathbf{r}_{0}=\text{vec}(Q^{(i)})-M^{(i)}\mathbf{x}_{0}$}
\For{$t=0,\ldots,T_{\max}$}
\State{Compute: $\|\mathbf{r}_{t}\|$} 
\If{$\|\mathbf{r}_{t}\|\leq \epsilon\|\mathbf{r}_{0}\|$} 
\State{Break}
\EndIf
\If{$t=0$}
\State{$\mathbf{p}_{1}=\mathbf{r}_{0}$}
\Else
\State{$\mathbf{p}_{t+1}=\mathbf{r}_{t}+\frac{\|\mathbf{r}_{t}\|^{2}}{\|\mathbf{r}_{t-1}\|^{2}}\mathbf{p}_{t}$}
\EndIf
\State{Compute: $\mathbf{v}_{t+1}=M^{(i)}\mathbf{p}_{t+1}$} \hfill \# see
Algorithm~\ref{algorithm_Hessian-vector} \label{alg:cg-l12}
\State{Compute: $\alpha=\frac{\|\mathbf{r}_{t}\|^{2}}{\mathbf{p}_{t+1}^{\top}\mathbf{v}_{t+1}}$}
\State{Compute: $\mathbf{x}_{t+1}=\mathbf{x}_{t}+\alpha \mathbf{p}_{t+1}$,
$\mathbf{r}_{t+1}=\mathbf{r}_{t}-\alpha \mathbf{v}_{t+1}$}
\EndFor
\end{algorithmic}
\end{algorithm}
\rev{Note that the matrix-vector product $M^{(i)}\mathbf{x}$ appearing in the linear CG subroutine (\Cref{alg:CG}, line~\ref{alg:cg-l12}) dominates the computational cost \revn{since $M^{(i)}$ is a matrix} of size $m_i{R}\times m_i{R}$. To overcome this computational bottleneck, we take advantage of the structure of $M^{(i)}=A^{(i)} + I_{m_{i}}\otimes C^{(i)} + \lambda_{i}L^{(i)}\otimes I_{R}$ in~\eqref{M} and propose a more efficient way by the following special Hessian-vector multiplication. 
}
Recall that 
$(B^{\top}\otimes A)\text{vec}(X)=\text{vec}(AXB)$, it follows from $\mathbf{x}=\text{vec}((U^{(i)})^{\top})$ that
\begin{align*}
 (L^{(i)}\otimes I_{R})\mathbf{x} &= \text{vec}((U^{(i)})^{\top}L^{(i)}), %\quad\text{and}\quad 
\\ 
 (I_{m_{i}}\otimes C^{(i)})\mathbf{x} &= \text{vec}(C^{(i)}(U^{(i)})^{\top}).
\end{align*}
Thus the larger Hessian-vector multiplication can be implemented by a series of smaller matrix multiplications as follows
\begin{equation}\label{mx}
M^{(i)}\mathbf{x}= \text{vec}(\lambda_{i}(U^{(i)})^{\top}L^{(i)}+C^{(i)}(U^{(i)})^{\top})+A^{(i)}\mathbf{x}.
\end{equation}

\begin{algorithm}[h]
    \caption{Hessian-vector multiplication $M^{(i)}\mathbf{x}$ in the CG method}
\label{algorithm_Hessian-vector}
\hspace*{0.02in} {\bf Input:} $L^{(i)}\in\mathbb{R}^{m_{i}\times m_{i}}$,
$\Omega^{(i)}_{j}$, $C^{(i)}\in\mathbb{R}^{R\times R}$, $U^{(-i)} :=
(U^{(j)})^{\odot_{j\neq i}}\in\revf{\mathbb{R}^{m_{(-i)}\times R}}$,
$\mathbf{x}:=\text{vec}((U^{(i)})^{\top})\in\mathbb{R}^{m_{i}R}$,
$\lambda_{i}\geq 0$.
\\
\hspace*{0.02in} {\bf Output:} $M^{(i)}\mathbf{x}$
\begin{algorithmic}[1]
\State{$X = \text{unvec}(\mathbf{x})\in\mathbb{R}^{R\times m_{i}}$}
\For{$j=1,\dots, m_i$}
    \State{Compute \revs{$N^{(i)}_{:,j}=\sum_{\ell\in\Omega^{(i)}_{j}}(\revs{U^{(-i)}_{\ell,:}X_{:,j}}) (U^{(-i)}_{\ell,:})^{\top}$}} 
\EndFor
\State{Compute: $M^{(i)}\mathbf{x}=\text{vec}(C^{(i)}X+\lambda_{i}XL^{(i)})+\text{vec}(N^{(i)})$ defined in (\ref{mx})}
\end{algorithmic}
\end{algorithm}

\rev{For the computation of $A^{(i)}\mathbf{x}$ in~\eqref{mx}, we make use of
the block diagonal structure of $A^{(i)}$ specified in~\eqref{Aj}. More precisely, let 
$$N^{(i)}_{:,j}:=A_{j}^{(i)}(U^{(i)}_{j,:})^{\top}=\sum_{\ell\in\Omega^{(i)}_{j}}(U^{(-i)}_{\ell,:}(U^{(i)}_{j,:})^{\top}) (U^{(-i)}_{\ell,:})^{\top} \quad\text{for}\quad j\in[m_i].$$ 
Then we have
\begin{equation}\label{hessian_vector}
    A^{(i)}\mathbf{x} = A^{(i)}\text{vec}((U^{(i)})^{\top}) =\text{vec}(N^{(i)})=\text{vec}((N^{(i)}_{:,1},\dots,N^{(i)}_{:,m_i})), 
\end{equation}
where the computation of each $N^{(i)}_{:,j}$ can be done in parallel. 
Details to compute the Hessian-vector product in the CG method are listed in
Algorithm~\ref{algorithm_Hessian-vector}.
}

\paragraph*{\bf Computational cost of AltMin-CG} % 

The computational cost for each alternating step \eqref{one_variable2} corresponds to the procedure required by 
line~\ref{algo:altmincg-l1}--line~\ref{algo:altmincg-l3} of Algorithm~\ref{alg:Altmin_CG}. 

The cost of forming $(U^{(j)})^{\odot_{j\neq i}}$ is 
$O(\frac{|\Omega|R}{\rho m_i})$, where $\rho$ denotes the sampling rate.  
The cost of computing $Q^{(i)}$ in (\ref{Q}) is $O(|\Omega|R)$ with access to $(U^{(j)})^{\odot_{j\neq i}}$. 
The cost of forming $C^{(i)}$ in (\ref{def:mat-ci}) is \dele{(to revise)}\dele{(gy) ? this we have discussed, I think it is right and the form is ok}
$O(\frac{|\Omega|R}{\rho m_i m_j})$.

\newcommand{\ncg}{n_{\text{CG}}}
\newcommand{\ncgb}{\tilde{n}_{\text{CG}}}
The major cost in Algorithm~\ref{alg:Altmin_CG} corresponds to line~\ref{algo:altmincg-l2}, which involves (inner) iterations of the linear CG. The total cost of line~\ref{algo:altmincg-l2} is $\ncg$ times the per-iteration cost of the linear CG algorithm (Algorithm \ref{alg:CG}), where $\ncg$ denotes the number of iterations required by the CG solver (Algorithm~\ref{alg:CG}) for producing $\mathbf{x}^{(i)}_{t+1}$. The per-iteration cost of Algorithm~\ref{alg:CG} is mainly composed of the following components. 
\begin{itemize}
\item Cost of computing $A^{(i)}\mathbf{x}$: $O(|\Omega|R)$, since the cost of computing $A_{j}^{(i)}(u^{(i)}_{j,:})^{\top}$ in (\ref{hessian_vector}) is $O(|\Omega^{(i)}_{j}|R)$ for $j=1,...,m_i$ and $\sum_{j=1,...,m_i}|\Omega_{j}^{(i)}| = |\Omega|$; 
\item Cost of computing $(L^{(i)}\otimes I_R)\mathbf{x}$: $O(\text{nnz}(L^{(i)})R)$; 
\item Cost of computing $(I_{m_i}\otimes C^{(i)})\mathbf{x}$: $O(m_{i}R)$. 
\end{itemize}
\revs{Hence, the cost of computing the Hessian-vector multiplication $M^{(i)}\mathbf{x}$ is 
$O(\text{nnz}(L^{(i)})R+|\Omega|R)$. The number of linear CG iterations needed
is theoretically bounded by the problem dimension; and \revn{in practice, we
limit this number by a constant iteration budget.} Therefore the dominant per-iteration cost of \Cref{alg:Altmin_CG} is 
$$O((\text{nnz}(L^{(i)})+|\Omega|)R).$$
}
\begin{remark}
The main computational challenge in finding the
solution of~\eqref{prog:sub-lsq} is the presence of graph Laplacian-based
regularization terms \revs{in~$M^{(i)}\in\mathbb{R}^{m_{i}R\times m_{i}R}$}. The similar difficulty can be found
in the graph-regularized least squares problem in~\cite{rao2015collaborative}.
More precisely, \revs{the matrix $M^{(i)}=A^{(i)} + I_{m_{i}}\otimes C^{(i)} + \lambda_{i}L^{(i)}\otimes I_{R}$} is not block diagonal, 
\rev{due to the fact that the component $L^{(i)}\otimes I_{R}$ is not block diagonal, since $L^{(i)}$ defined in \eqref{eq:def-Li} has nonzeros---corresponding to the edges of the graph $\grp^{(i)}$---on its off-diagonal terms.
Therefore, the minimization problem~\eqref{prog:sub-lsq} cannot be decomposed into $m_i$ separable smaller
problems in $\reals^{R}$.} \label{rmk:3.1}
\end{remark}
In light of Remark~\ref{rmk:3.1}, we also
consider %
an alternating direction method of
multipliers (ADMM) as an alternative way to address the difficulty with this
nonseparable quadratic problem.

\subsection{ADMM}\label{ssec:admm}
\rev{Besides AltMin-CG, we adapt an ADMM algorithm~\cite{ge2016uncovering} to solve the graph-regularized problem~\eqref{problem} \revs{whose objective consists of three terms: the data fitting term, the graph regularizers, and the Frobenius norm-based regularizers}.} 
% % 
% 
\revn{The advantage of ADMM lies in that it decomposes complex optimization problems into sequences of simpler subproblems and that it splits coupling constraints by a dual multiplier. 
(Another reason for its popularity is that, if the objective function is strongly convex---which is not the case of~\eqref{problem}---and Lipschitz continuous, then with appropriate choice of parameters ADMM will convergence linearly.) 
For nonconvex problems, ADMM can be considered as a local minimization method, and the hope is that it will possibly have better convergence properties
than other local optimization methods~\cite{boyd2011distributed}.}

\revs{Since $L^{(i)}$ brings about the coupling effect to the quadratic minimization problem underlying \eqref{problem},} we introduce $B^{(i)}$ as an auxiliary variable that equals $U^{(i)}$ to decouple the regularization terms in problem (\ref{problem}) as follows
\begin{align}\label{problem_admm}
\min_{U^{(1)},\ldots,U^{(k)}}
&\frac{1}{2}\|\mathcal{P}_{\Omega}(\mathcal{T}-\llbracket U^{(1)},\cdots, U^{(k)}\rrbracket)\|_{\mathrm{F}}^{2}
+\sum_{i=1}^{k}\frac{\lambda_{i}}{2}\trace\big((B^{(i)})^{\top} L^{(i)} B^{(i)}\big) 
+ \sum_{i=1}^{k}\frac{\lambda_{i}}{2}\|(U^{(j)})^{\odot_{j\neq i}}\|_{\mathrm{F}}^{2},\nonumber\\
&\text{subject to} \quad U^{(i)}=B^{(i)},~\text{for}~i=1,\ldots,k.
\end{align}
Then the augmented Lagrangian for the above optimization problem
(\ref{problem_admm}) is
\begin{eqnarray}
&&\mathcal{L}_{\eta}(U,B,Y)=f(U,B)+\sum_{i=1}^{k}\langle Y^{(i)},B^{(i)}-U^{(i)} \rangle+\sum_{i=1}^{k}\frac{\eta}{2}\|B^{(i)}-U^{(i)}\|_{\mathrm{F}}^{2},
\label{lagrange}
\end{eqnarray}
where $f(U,B)$ denotes the objective function of~\eqref{problem_admm}, $Y^{(i)}\in\mathbb{R}^{m_{i}\times R}$ are the Lagrange multipliers, and $\eta > 0$ is a penalty parameter. \revn{Given the current iterates $U_t$ and $B_t$, applying} the ADMM iterative scheme successively to minimize 
$\mathcal{L}_{\eta}$ over $\{U^{(1)},\ldots,U^{(k)}\}$ and $\{B^{(1)},\ldots,B^{(k)}\}$ turns out to
\begin{eqnarray}
&&\{U_{t+1}^{(1)},\ldots,U_{t+1}^{(k)}\}=\argmin_{U^{(1)},\ldots,U^{(k)}}\mathcal{L}_{\eta_{t}}(U^{(1)},\ldots,U^{(k)},B_{t}^{(1)},\ldots,B_{t}^{(k)},Y_{t}^{(1)},\ldots,Y_{t}^{(k)}), \label{update_U}\\
&&\{B_{t+1}^{(1)},\ldots,B_{t+1}^{(k)}\}=\argmin_{B^{(1)},\ldots,B^{(k)}}\mathcal{L}_{\eta_{t}}(U_{t+1}^{(1)},\ldots,U_{t+1}^{(k)},B^{(1)},\ldots,B^{(k)},Y_{t}^{(1)},\ldots,Y_{t}^{(k)}), \label{update_B}\\
&& Y_{t+1}^{(i)} = Y_{t}^{(i)}+\eta_{t}(B_{t+1}^{(i)}-U_{t+1}^{(i)}), \quad i = 1,\ldots,k. \label{Y_update}
\end{eqnarray}
\revs{Next we consider solving these subproblems step by step.}
	
\textbf{Updating $\{U_{t+1}^{(1)},\ldots,U_{t+1}^{(k)}\}$:} 
The optimization problem~\eqref{update_U} can be rewritten as follows when updating $\{U_{t+1}^{(1)},\ldots,U_{t+1}^{(k)}\}$
\begin{equation}\label{update_U1}
\min_{U^{(1)},\ldots,U^{(k)}}\frac{1}{2}\
\|\mathcal{P}_{\Omega}(\mathcal{T}-\llbracket U^{(1)},\ldots,U^{(k)}\rrbracket)\|_{\mathrm{F}}^{2}+\sum_{i=1}^{k}\frac{\lambda_{i}}{2}\|(U^{(j)})^{\odot_{j\neq i}}\|_{\mathrm{F}}^{2}+\sum_{i=1}^{k}\frac{\eta_{t}}{2}\|U^{(i)}-B_{t}^{(i)}-(1/\eta_{t})Y_{t}^{(i)}\|_{\mathrm{F}}^{2}.
\end{equation}
We apply the alternating minimization method to update each $U^{(i)}$ for $i=1,\ldots,k$, while fixing the other variables. 
Then problem (\ref{update_U1}) becomes a quadratic optimization problem. For convenience, we ignore the subscript in the fixed $U^{(j)}$ for $j\neq i$, and the resulting subproblem with respect to $U^{(i)}$ is formulated as
\begin{equation}\label{updateU}
    \min_{U^{(i)}}\frac{1}{2}\|\mathcal{P}_{\Omega^{(i)}}(\mathcal{T}_{(i)}-U^{(i)}\revs{((U^{(j)})^{\odot_{j\neq i}})}^{\top})\|_{\mathrm{F}}^{2}+\sum_{\substack{j=1\\ j\neq i}}^{k}\frac{\lambda_{j}}{2}\|(U^{(n)})^{\odot_{n\neq j}}\|_{\mathrm{F}}^{2}+\frac{\eta_{t}}{2}\|U^{(i)}-B_{t}^{(i)}-\frac{Y_{t}^{(i)}}{\eta_{t}}\|_{\mathrm{F}}^{2},
\end{equation}
\revv{which is separable by rows of $U^{(i)}$. Thus, each row of the new iterate $U_{t+1}^{(i)}$ is the solution to the following linear equation in $\mathbb{R}^R$,} 
\begin{equation}\label{update_U2}
(A_{j}^{(i)}+\eta_{t}I_{R}+C^{(i)})(U^{(i)}_{j,:})^{\top}= \revs{\big((\mathcal{P}_{\Omega^{(i)}} (\mathcal{T}_{(i)}) )(U^{(j)})^{\odot_{j\neq i}}+\eta_{t} B_{t}^{(i)}+Y_{t}^{(i)}\big)}^{\top}_{j,:},
\end{equation}
\revv{where $A_j^{(i)}$ and $C^{(i)}$ are defined in~\eqref{Aj} and~\eqref{def:mat-ci} respectively, for $j=1,\ldots,m_{i}$.} % 

\textbf{Updating $\{B_{t+1}^{(1)},\ldots,B_{t+1}^{(k)}\}$:} By alternating minimization method, the optimization
problem (\ref{update_B}) can be reformulated as follows when  updating the variables
$\{B_{t+1}^{(1)},\ldots,B_{t+1}^{(k)}\}$, 
\begin{equation}\label{update_B1}
\min_{B^{(i)}}
{\lambda_{i}}\trace ( (B^{(i)})^{\top} L^{(i)} B^{(i)}) 
+{\eta_{t}}\|U_{t+1}^{(i)}-B^{(i)}-(1/\eta_{t})Y_{t}^{(i)}\|_{\mathrm{F}}^{2},
\end{equation}
\revs{which boils down to solving the linear equation,}
\begin{equation}\label{update_B2}
(\eta_{t}I_{m_{i}}+\lambda_{i}L^{(i)})B^{(i)}=\eta_{t}U^{(i)}_{t+1}-Y_{t}^{(i)}.
\end{equation}
\revn{To update $B_{t+1}^{(i)}$ in~\eqref{update_B2}, we adopt the CG method (\Cref{alg:CG}) combined with the Hessian-vector product presented in \Cref{AltMin-CG}.} 
\revn{Similarly, the iterate $U_{t+1}^{(i)}$ in~\eqref{update_U2} is updated by rows using the same  CG method.} 

In summary, the above procedures are presented in \Cref{algorithm_ADMM}.

\begin{algorithm}
\caption{ADMM for solving (\ref{problem})}
\label{algorithm_ADMM}
\hspace*{0.02in} {\bf Input:} Observed tensor $\mathcal{P}_{\Omega}(\mathcal{T})$, graph Laplacian $\mathcal{L}^{(1)},\ldots, \mathcal{L}^{(k)}$, observed set $\Omega$, parameters \revn{$\gamma$, $\eta_{\max}$}, $\lambda_{1},\ldots,\lambda_{k}$ and $\lambda_{L}$ \\
\hspace*{0.02in} {\bf Output:} $(U_{t}^{(i)})_{i=1,\ldots,k}$
\begin{algorithmic}[1]
\State{Initialization: $U_{0}^{(1)},\ldots,U_{0}^{(k)},\eta_{0}$}
\For{$t=0,1,2,\ldots,$}
\If{stopping criterion is satisfied}
\State Break 
\EndIf{}
\For{$i=1,\ldots,k$}\label{update_start}
\For{$j=1,\ldots,m_{i}$}
\State {Update the $j$-th row of $U_{t+1}^{(i)}$ by \revs{solving} \eqref{update_U2}}\label{line_update_U}
\EndFor
\State {Update $B_{t+1}^{(i)}$ by \revs{solving} \eqref{update_B2}}\label{line_update_B}
\State {$Y_{t+1}^{(i)}=Y_{t}^{(i)}+\eta^{(i)}_{t}(B^{(i)}_{t+1}-U_{t+1}^{(i)})$}
\EndFor \label{update_end}
\State \revn{Update $\eta_{t+1}= \min(\gamma \eta_{t}, \eta_{\max})$} 
\EndFor
\end{algorithmic}
\end{algorithm}

\paragraph*{\bf Computational cost of ADMM} 
We analyze the computational cost for each alternating step of the augmented
Lagrangian step (\ref{lagrange}) corresponding to the procedure required by
line~\ref{update_start}--line~\ref{update_end} of
Algorithm~\ref{algorithm_ADMM}. The costs of forming $(U^{(j)})^{\odot_{j\neq i}}$, $C^{(i)}$ and $Q^{(i)}$ are computed  
in the complexity analysis part of Section~\ref{AltMin-CG}. 
\rev{The dominant costs for solving the linear equations~\eqref{update_U2} and~\eqref{update_B2}) are $\ncgb (m_{i} + |\Omega|)R$ and
$\ncgb\text{nnz}(L^{(i)})R$ respectively, 
where $\ncgb$ 
denotes the maximal number of iterations required for
solving the linear equations (\ref{update_U2}) and (\ref{update_B2}).} 
\rev{Similar to AltMin-CG, the number $\ncgb$ is theoretically bounded by the
problem dimension, and in practice, \revn{is limited by a constant iteration
budget}, the per-iteration cost of Algorithm~\ref{algorithm_ADMM} (ADMM) is 
$$O((\text{nnz}(L^{(i)}) + |\Omega|)R),$$ 
which is of the same order as Algorithm~\ref{alg:Altmin_CG} (AltMin-CG).}

\section{Convergence analysis}\label{sec:convergence} 
In this section, we will show the global convergence of \revn{\Cref{algo:altmin} (AltMin) to a critical point.} 
\revn{It will come as a consequence of the results in~\cite[section~2]{xu2013block}.} 
\devv{\footnote{\dev{We go with "critical points". The qualification of "local minimizer" or "global
solution" depends on the properties of the sampling set and the ground-truth
tensor of the completion problem; while the KL property used in this section
and the convergence of AltMin to critical points does not depend on these specific properties.}}In contrast, the iterate sequence generated by
Algorithm~\ref{algorithm_ADMM} (ADMM) converges to the KKT point of
(\ref{problem_admm}).}

\subsection{Preliminaries}
\revv{The following definitions and lemmas (\cite{rockafellar2009variational}, \cite[Definition 1]{attouch2010proximal}) 
are used for the convergence analysis in the next subsection.}

\begin{definition} 
Let $f: \mathbb{R}^{m}\mapsto \mathbb{R} \cup\{+\infty\}$ be proper and lower semicontinuous. 
(i) The \emph{domain} of $f$ is defined and denoted by $\text{dom}f:=\{\mathbf{x}\in\mathbb{R}^{m}: f(\mathbf{x})<+\infty\}$.  
(ii) For each $\mathbf{x}\in\text{dom}f$, the \emph{Fr\'echet subdifferential} of $f$ at $\mathbf{x}$, denoted as $\hat{\partial} f(\mathbf{x})$, is defined as follows: 
	 \begin{align*}
	 	\hat{\partial}f(\mathbf{x})=\left\{\mathbf{\xi}\in\mathbb{R}^{m}:
        \liminf_{\substack{\mathbf{y}\neq \mathbf{x}\\ \mathbf{y}\to \mathbf{x}}} \frac{f(\mathbf{y})-f(\mathbf{x})-\langle \mathbf{\xi},\mathbf{x}- \mathbf{y}\rangle}{\|\mathbf{x}- \mathbf{y}\|}\geq 0 \right\}.
	 \end{align*}
If $\mathbf{x}\notin \text{dom}f$, then $\hat{\partial} f(\mathbf{x})=\emptyset$. 
(iii) The \emph{limiting subdifferential} of $f$ at $\mathbf{x}\in \text{dom}f$, denoted as $\partial f(\mathbf{x})$, is defined as follows\devv{~(to verify)} \cite{mordukhovich2006variational}
     \begin{align*}
     	\partial f(\mathbf{x}):=\{\mathbf{\xi}^{*}\in \mathbb{R}^{m}:\exists
        (\mathbf{x}_{n})_{n\geq 0}, \mathbf{x}_n\to \mathbf{x},
        f(\mathbf{x}_{n})\to f(\mathbf{x}), ~\text{s.t.}~\exists
        \mathbf{\xi}_{n}\in\hat{\partial}f(\mathbf{x}_{n}), \mathbf{\xi}_n\to \mathbf{\xi}^{*}\}.
     \end{align*}
\end{definition}

\begin{definition}[{K{\L} function~\cite[Definition 2.5]{xu2013block}}]\label{kl}
A function $f(\mathbf{x})$ satisfies the Kurdyka-{\L}ojasiewicz (K{\L}) property at point $\bar{\mathbf{x}}\in\text{dom}(\partial f)$ if, in a certain neighborhood $\mathcal{U}$ of $\bar{\mathbf{x}}$, there exists $\psi(s)=cs^{1-\theta}$ for some $c>0$ and $\theta\in[0,1)$ such that the K{\L} inequality below holds:
\begin{align*}
\psi'(f(\mathbf{x})-f(\mathbf{x^{*}}))\text{dist}(0,\partial f(\mathbf{x}))\geq 1, \text{ for any } \mathbf{x}\in \mathcal{U}\cap \text{dom}(\partial f) \text{ and } f(\mathbf{x})\neq f(\mathbf{x^{*}}),
\end{align*}
where $\text{dom}(\partial f)=\{\mathbf{x}: \partial f(\mathbf{x}) \neq  \emptyset)$ and $\text{dist}(0,\partial f(\mathbf{x}))=\min\{\|\mathbf{y}\|:\mathbf{y}\in\partial f(\mathbf{x})\}$.\\
If $f$ satisfies the K{\L} property at each point of $\text{dom}(f)$,
$f$ is called a K{\L} function.
\end{definition}

\begin{definition}[Strong convexity]\label{sconvexity}
A differentiable function %
$f\revf{:\text{dom}f}\mapsto\mathbb{R}$
is strongly convex if and only if \margn{dom$f$ suffices. dom$f$ first appears in Def.5.1, 1).}
\begin{align*}
f(\mathbf{y})\geq f(\mathbf{x})+ \langle\nabla f(\mathbf{x}), \mathbf{y}-\mathbf{x}\rangle +\frac{\mu}{2}\|\mathbf{y}-\mathbf{x}\|^{2}
\end{align*}
holds for some $\mu>0$ and all $\mathbf{x}, \mathbf{y}\in\revf{\text{dom}f}$.
\end{definition}

\begin{definition}[Coercivity]\label{coercive}
    A \revf{real-valued} function $f:\mathbb{R}^{m}\to \revf{\mathbb{R}}$ %
    is called coercive if and only if 
$f(\mathbf{x})\to +\infty$ as $\|\mathbf{x}\|\to +\infty$.
\end{definition}

\subsection{Convergence properties of AltMin}

\revv{The following two lemmas follow directly from Theorem 2.8 and Theorem 2.9
of~\cite{xu2013block} and are used for proving the convergence of the proposed
alternating minimization method.}

\begin{lemma}\label{first_order}
Assume $f$ satisfies the K{\L} property and $\nabla f$ is Lipschitz continuous on any bounded subset of its domain. Let $(U_{0}^{(1)},\ldots,U_{0}^{(k)})$ be any initialization and $(U_{t}^{(1)},\ldots,U_{t}^{(k)})$ be the sequence generated
by Algorithm~\ref{algo:altmin}, where each subproblem $f_{t}^{(i)}(U^{(i)})$
(line 7 of Algorithm~\ref{algo:altmin}) is
strongly convex \revv{and is solved exactly}. If the sequence $(U_{t}^{(1)},\ldots,U_{t}^{(k)})$ is bounded and there exists a finite limit point $(U_{*}^{(1)},\ldots,U_{*}^{(k)})$, then it converges to $(U_{*}^{(1)},\ldots,U_{*}^{(k)})$, which is a critical point of $f$.
\end{lemma}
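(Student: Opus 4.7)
The plan is to cast Algorithm~\ref{algo:altmin} as an instance of the block coordinate descent framework analyzed in \cite{xu2013block}, and to verify the three standard hypotheses that, together with the K{\L} property, force convergence of the full sequence to a critical point. Concretely, I would verify: (H1) a sufficient decrease estimate of the form $f(U_t) - f(U_{t+1}) \ge a\sum_{i=1}^{k}\|U_{t+1}^{(i)} - U_{t}^{(i)}\|_F^2$; (H2) a subgradient bound $\mathrm{dist}(0,\partial f(U_{t+1})) \le b\sum_{i=1}^{k}\|U_{t+1}^{(i)} - U_{t}^{(i)}\|_F$; and (H3) continuity of $f$ along the iterate sequence. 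Since $f$ is $C^1$ with $\nabla f$ Lipschitz on bounded sets and the iterates are assumed bounded, (H3) is immediate and $\partial f = \{\nabla f\}$ throughout.

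For (H1), I would exploit the strong convexity of each subproblem. By hypothesis $f_{t+1}^{(i)}$ is strongly convex with some modulus $\mu_i>0$, and $U_{t+1}^{(i)}$ is its exact minimizer, so
\begin{equation*}
f_{t+1}^{(i)}(U_{t}^{(i)}) - f_{t+1}^{(i)}(U_{t+1}^{(i)}) \;\ge\; \tfrac{\mu_i}{2}\,\|U_{t+1}^{(i)} - U_{t}^{(i)}\|_F^{2}.
\end{equation*}
Using the telescoping identity $f_{t+1}^{(i)}(U_{t+1}^{(i)}) = f_{t+1}^{(i+1)}(U_{t}^{(i+1)})$ and summing over $i=1,\ldots,k$ gives (H1) with $a = \tfrac{1}{2}\min_i \mu_i$.

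For (H2), the first-order optimality condition for the $i$-th subproblem yields $\nabla_{U^{(i)}} f\bigl(U_{t+1}^{(1)},\ldots,U_{t+1}^{(i)},U_{t}^{(i+1)},\ldots,U_{t}^{(k)}\bigr) = 0$. Comparing this to $\nabla_{U^{(i)}} f(U_{t+1})$ and using Lipschitz continuity of $\nabla f$ on the bounded set containing the iterates (with uniform constant $L$) gives $\|\nabla_{U^{(i)}} f(U_{t+1})\|_F \le L\sum_{j>i}\|U_{t+1}^{(j)} - U_{t}^{(j)}\|_F$. Aggregating over $i$ and using $\partial f(U_{t+1}) = \{\nabla f(U_{t+1})\}$ yields (H2) with a constant $b$ depending only on $L$ and $k$.

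With (H1)--(H3) established at a bounded iterate sequence having a finite limit point $U_*$, I would then invoke the K{\L} property of $f$ at $U_*$ in the standard way: combining the K{\L} inequality on a neighborhood of $U_*$ with (H1) and (H2) produces a summable telescoping estimate implying the finite-length property $\sum_{t\ge 0}\|U_{t+1} - U_t\|_F < \infty$, which forces the entire sequence to converge; since $U_*$ is a limit point, the limit equals $U_*$. Passing to the limit in the per-block optimality conditions and using continuity of $\nabla f$ gives $\nabla f(U_*) = 0$, so $U_*$ is a critical point. The main technical obstacle I anticipate is the careful bookkeeping in (H2): the per-block stationarity must be converted into a full-gradient bound, and this is where the boundedness assumption is crucial in order to make the Lipschitz constant uniform across all iterates.
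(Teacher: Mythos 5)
The paper does not actually prove this lemma: it is presented as following directly from Theorems 2.8 and 2.9 of \cite{xu2013block}, and your (H1)--(H3) plus K{\L} finite-length scheme is exactly the proof architecture of that cited theorem, so your proposal is a correct reconstruction of the intended argument. The only point to make explicit is that the strong-convexity moduli $\mu_i$ must be bounded below uniformly in $t$ for your constant $a=\tfrac{1}{2}\min_i\mu_i$ to make sense (which holds for the paper's $f$, since each subproblem's quadratic form contains $\lambda_i L^{(i)}\otimes I_R \succeq \lambda_i I$), exactly as required in the cited framework.
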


The convergence rate of the sequence is as follows.
\begin{lemma}\label{lemma:convergence_rate}
Assume $\nabla f$ is Lipschitz continuous on any bounded set and suppose that $U_{t}^{(i)}$ converges to a critical point $U_{*}^{(i)}$ for $i=1,\ldots,k$, at which $f$ satisfies the K{\L} inequality with $\psi(s) = cs^{1-\theta}$ for $c > 0$ and $\theta \in  [0, 1)$.
We have:\\
1. If $\theta = 0$, $U_{t}^{(i)}$ converges to $U_{*}^{(i)}$ in a finite number of iterations;\\
2. If $\theta \in  (0, \frac{1}{2}]$, $\|U_{t}^{(i)}-U_{*}^{(i)}\|\leq \beta\tau^{t}$, $\forall t \geq t_{0}$ for certain $t_{0} > 0$, $\beta > 0$, $\tau\in [0, 1)$;\\
3. If $\theta \in  (\frac{1}{2},1)$, $\|U_{t}^{(i)}-U_{*}^{(i)}\|\leq \beta t^{-(1-\theta)/(2\theta-1)}$, $\forall t \geq t_{0}$ for certain $t_{0} > 0$, $\beta > 0$.\\
Part 1, 2 and 3 correspond to finite convergence, linear convergence, and sublinear convergence, respectively.
\end{lemma}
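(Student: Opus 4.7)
The plan is to recognize this as a direct application of Theorem~2.9 of~\cite{xu2013block} and verify that the hypotheses for our alternating minimization setting are met. In particular, since the statement itself assumes that $(U_t^{(i)})$ already converges to a critical point $U_*^{(i)}$ at which $f$ satisfies the K{\L} inequality with $\psi(s)=cs^{1-\theta}$, the task reduces to establishing the two standard ingredients of any K{\L}-based rate analysis and then invoking the classical scalar recursion lemmas.

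The first ingredient I would establish is a \emph{sufficient decrease} of the form
\begin{equation*}
f(U_t) - f(U_{t+1}) \geq \tfrac{\mu}{2}\sum_{i=1}^{k}\|U_{t+1}^{(i)} - U_t^{(i)}\|_F^2,
\end{equation*}
which follows because each subproblem $f_{t+1}^{(i)}$ is strongly convex (with modulus bounded below by a common $\mu > 0$, from the assumption of Lemma~\ref{first_order}) and is solved exactly, so strong convexity at the minimizer $U_{t+1}^{(i)}$ gives the drop block-by-block; summation over $i$ telescopes. The second ingredient is a \emph{relative error bound}
\begin{equation*}
\mathrm{dist}(0,\partial f(U_{t+1})) \leq \beta'\sum_{i=1}^{k}\|U_{t+1}^{(i)} - U_t^{(i)}\|_F,
\end{equation*}
which I would obtain from the block-optimality condition $0 = \nabla_{U^{(i)}} f_{t+1}^{(i)}(U_{t+1}^{(i)})$ by rewriting the full partial gradient $\nabla_{U^{(i)}} f(U_{t+1})$ as the difference of partial gradients along the other blocks taken at the old versus new values, and then applying Lipschitz continuity of $\nabla f$ on the bounded set containing the iterates.

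With these two ingredients in place, the K{\L} inequality $\psi'(f(U_t)-f(U_*))\,\mathrm{dist}(0,\partial f(U_t)) \geq 1$ with $\psi(s)=cs^{1-\theta}$ yields a scalar recursion on the residuals $r_t := f(U_t) - f(U_*)$. Three regimes then fall out by classical arguments. When $\theta=0$, the K{\L} inequality forces $\mathrm{dist}(0,\partial f(U_t))$ to stay bounded below by $1/c$ as long as $r_t > 0$, which together with the sufficient decrease and relative error bounds implies $r_t = 0$ after finitely many iterations; combined with strong convexity around $U_*$, this yields finite convergence of $U_t^{(i)}$. When $\theta\in(0,1/2]$, the recursion reduces to $r_{t+1}\leq \kappa r_t$ with $\kappa\in(0,1)$, which propagates to the iterates via a telescoping-sum argument $\|U_t - U_*\| \leq \sum_{s\geq t}\|U_{s+1}-U_s\|$ and the sufficient decrease, yielding the linear rate $\|U_t^{(i)} - U_*^{(i)}\|\leq \beta\tau^t$. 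When $\theta\in(1/2,1)$, the same chain produces a recursion of the form $r_t - r_{t+1}\geq c\,r_t^{2\theta/(1-\theta)}$, and the standard scalar lemma (e.g.\ \cite[Lemma~1]{attouch2010proximal}) on decreasing sequences gives $r_t = O(t^{-(1-\theta)/(2\theta-1)})$, which is again transferred to $\|U_t^{(i)} - U_*^{(i)}\|$ by the telescoping argument.

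The main obstacle is a bookkeeping issue rather than a conceptual one: the K{\L} inequality only holds in a neighborhood $\mathcal{U}$ of $U_*$, so I would first use the assumed convergence $U_t\to U_*$ to restrict attention to indices $t \geq t_0$ with $U_t\in\mathcal{U}$, and only then apply the recursion. A secondary subtlety is that $\nabla f$ for the CP-based objective is only Lipschitz on bounded subsets, so the constant $\beta'$ in the relative error bound must be chosen with respect to a ball containing the (bounded) tail of the sequence; this is legitimate precisely because convergence, hence boundedness, is already assumed in the hypothesis of Lemma~\ref{lemma:convergence_rate}. With these precautions, the three rates are exactly those delivered by Theorem~2.9 of~\cite{xu2013block}, completing the proof.
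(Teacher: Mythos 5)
Your proposal matches the paper's treatment: the paper gives no proof of this lemma at all, stating only that it ``follows directly from Theorem 2.9 of \cite{xu2013block}'', which is precisely the theorem you invoke, and your unpacking of its internals (sufficient decrease from strong convexity of the exactly-solved subproblems, relative error bound from block optimality plus Lipschitz continuity of $\nabla f$ on a bounded set, then the K{\L}-driven scalar recursion) is the standard and correct argument. One minor slip worth fixing: in the $\theta\in(\tfrac12,1)$ case the recursion should read $r_t-r_{t+1}\geq c\,r_{t+1}^{2\theta}$ (not $r_t^{2\theta/(1-\theta)}$), which yields $r_t=O(t^{-1/(2\theta-1)})$ for the function values, the exponent $-(1-\theta)/(2\theta-1)$ appearing only after the telescoping transfer to the iterates --- this does not affect your conclusion.
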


\revv{We show that the iterates generated by Algorithm~\ref{algo:altmin}
(AltMin)\devv{\ref{alg:Altmin_CG}} converge to a stationary point in the
following theorem.
\devv{by checking all the assumptions in Lemma \ref{first_order}. While, the
sequence generated by Algorithm~\ref{algorithm_ADMM} globally converges to KKT
point.} 
Note that this theorem applies to Algorithm~\ref{alg:Altmin_CG} (AltMin-CG),
provided that the updated iterate of each of the subproblems (line
\ref{algo:altmincg-l2} in Algorithm~\ref{alg:Altmin_CG}) is the exact minimizer of the corresponding (graph-regularized) least-squares problem.
In practice, this requires setting a sufficiently low tolerance parameter $\epsilon$ for the
subproblem solver (Algorithm~\ref{alg:CG}).}

\begin{theorem}\label{theorem_altmin}
The iterates $(U_{t}^{(1)},\ldots,U_{t}^{(k)})$ generated by
Algorithm~\ref{algo:altmin} (AltMin) from any initialization converge globally
to a critical point of $f$ in~\eqref{problem}. Moreover, linear convergence and
sublinear convergence in parts 2 and 3 of Lemma~\ref{lemma:convergence_rate}
apply depending on $\theta$ in K{\L} property of $f$.
\end{theorem}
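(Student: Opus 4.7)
My plan is to invoke Lemma~\ref{first_order} and then
immediately apply Lemma~\ref{lemma:convergence_rate}. Thus it is enough to verify
its four hypotheses for the objective $f$ of~\eqref{problem}: (a) $f$ satisfies
the K{\L} property, (b) $\nabla f$ is Lipschitz continuous on any bounded subset
of its domain, (c) each block subproblem
$f_{t+1}^{(i)}(U^{(i)})$ is strongly convex, and (d) the iterate sequence
$(U_{t}^{(1)},\ldots,U_{t}^{(k)})$ is bounded (hence admits a finite limit point
by Bolzano–Weierstrass).

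For (a) and (b), the key observation is that $f$ is a polynomial in the entries
of $(U^{(1)},\ldots,U^{(k)})$ (the data-fitting term is a polynomial of degree
$2k$ and the two regularization terms are quadratic). In particular, $f$ is
real-analytic on $\reals^{m_1\times R}\times\cdots\times\reals^{m_k\times R}$,
which is a classical sufficient condition for the K{\L} property. Similarly,
$\nabla f$ is a polynomial map and is therefore Lipschitz continuous on any
bounded subset.

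For (c), I will use the expression of $f_{t+1}^{(i)}$ as the quadratic
form~\eqref{g_function} with Hessian $M^{(i)}$ given in~\eqref{M}. Since
$A^{(i)}$ is a sum of Gram matrices (hence positive semidefinite), and
$C^{(i)}$ is a nonnegative diagonal matrix by~\eqref{def:mat-ci}, it suffices to
bound $\lambda_i L^{(i)} \otimes I_R$ from below. Using
$\bold{Lap}^{(i)}\succeq 0$ and the definition~\eqref{eq:def-Li}, one has
$L^{(i)} = \lambda_L\bold{Lap}^{(i)} + I_{m_i} \succeq I_{m_i}$, so that
$M^{(i)}\succeq \lambda_i I_{m_i R}$, which under the standing assumption
$\lambda_i>0$ gives strong convexity with modulus at least $\lambda_i$.

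For (d), I will show that $f$ is coercive, which combined with the fact that
$f(U_{t})$ is nonincreasing along the AltMin iterates (each subproblem is solved
exactly to its unique minimizer by the strong convexity established in (c))
yields boundedness of the iterates. Coercivity follows from the identity
$\langle U^{(i)}U^{(i)\top},L^{(i)}\rangle = \lambda_L\langle
U^{(i)}U^{(i)\top},\bold{Lap}^{(i)}\rangle + \|U^{(i)}\|_F^2 \geq \|U^{(i)}\|_F^2$,
so $f(U) \geq \sum_{i=1}^{k}\frac{\lambda_i}{2}\|U^{(i)}\|_F^2$, which tends to
$+\infty$ as $\|(U^{(1)},\ldots,U^{(k)})\|\to+\infty$. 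With (a)--(d) in hand,
Lemma~\ref{first_order} yields global convergence of
$(U_{t}^{(1)},\ldots,U_{t}^{(k)})$ to a critical point of $f$, and
Lemma~\ref{lemma:convergence_rate} directly delivers the claimed rates as a
function of the K{\L} exponent $\theta$. The main (mild) obstacle is the
verification of (d): one must be careful that coercivity together with the
monotone decrease of $f$ along AltMin iterations yields boundedness, which in
turn requires the exact-minimization assumption that Lemma~\ref{first_order}
already presupposes; no additional work is needed beyond observing that the
regularization term dominates $\sum_i \|U^{(i)}\|_F^2$.
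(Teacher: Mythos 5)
Your proposal is correct and follows essentially the same route as the paper: verify the hypotheses of Lemma~\ref{first_order} (K{\L} property via real-analyticity of the polynomial objective, Lipschitz gradient on bounded sets, strong convexity of each subproblem from the positive definiteness of $L^{(i)}$, and boundedness of the iterates via coercivity), then conclude with Lemma~\ref{lemma:convergence_rate}. If anything, you supply slightly more detail than the paper does on two points it states without computation — the explicit lower bound $M^{(i)}\succeq\lambda_i I_{m_iR}$ and the coercivity estimate $f(U)\geq\sum_i\tfrac{\lambda_i}{2}\|U^{(i)}\|_F^2$ combined with the monotone decrease of $f$ along the iterates.
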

\begin{proof}
According to Lemma~\ref{first_order}, we need to check whether all the
assumptions satisfied.

1) Function $f$ in (\ref{problem}) is a K{\L} function with $\theta\in [1/2,1)$
as it is a combination of polynomials which are one kind of real analytic
functions (see \cite[Definition 1.1.5]{krantz2002primer}). The real analytic
function itself and the finite sum or product of real analytic functions are
K{\L} functions, see \cite[section 4]{attouch2010proximal} and \cite[section
2.2]{xu2013block}.

2) Gradient $\nabla f$ is Lipschitz continuous on any bounded subset of domain
since $f$ is a $\mathbf{C}^{\infty}$ function.

3) For $i=1,\ldots,k$, $f^{(i)}$ in~\eqref{one_variable2} is strongly convex by Definition~\ref{sconvexity} since $L^{(i)}$ in~\eqref{eq:def-Li} is positive definite. Therefore, the quadratic form $g^{(i)}$ of the subproblems~\eqref{g_function} is strongly convex through the identification
$g^{(i)}(\text{vec}((U^{(i)})^{\top})) = f^{(i)}(U^{(i)})$. Moreover,
the solution for each $g^{(i)}$ corresponds to the exact minimizer.

4) Notice that since $f$ is coercive as defined in Definition~\ref{coercive} and
real analytic, it is guaranteed to produce a bounded sequence
$(U_{t}^{(1)},\ldots,U_{t}^{(k)})$, thus it has a critical point
$(U_{*}^{(1)},\ldots,U_{*}^{(k)})$.

Lemma~\ref{first_order} then implies that \revf{the sequence 
generated by Algorithm~\ref{alg:Altmin_CG} from any initial point converges %
to} a critical point $(U_{*}^{(1)},\ldots,U_{*}^{(k)})$ of $f$.
Moreover, the asymptotic convergence rates in parts 2 and 3 of
Lemma~\ref{lemma:convergence_rate} apply as $\theta\in [1/2,1)$.
\end{proof}

\section{Experiments}\label{sec:numerical} 
In this section, we carry out some numerical experiments to demonstrate the
\revn{workings} of our proposed algorithms Algorithm \ref{alg:Altmin_CG} (AltMin-CG) and Algorithm \ref{algorithm_ADMM} (ADMM) on the LRTC model (\ref{problem}). 
All numerical experiments were performed on a Macbook Pro with a 2.3 GHz Intel Core i7 CPU, 16GB RAM and MATLAB R2015a with Tensor Toolbox version 2.5~\cite{bader2012matlab}. The source code
is made available online.\footnote{
\url{https://gitlab.com/ricky7guanyu/tensor-completion-with-regularization-term}.} %

\rev{First, we test the effect of the graph Laplacian regularization in the LRTC model~\eqref{problem} and we compare the recovery quality of solutions to the graph-regularized tensor completion model to two other models without graph regularization. 
Then, we evaluate time efficiency of the proposed methods (\Cref{alg:Altmin_CG} and \Cref{algorithm_ADMM}) in optimizing the graph-regularized model \eqref{problem}, and compare them with the baseline methods on both synthetic data and real data.
}

\newcommand{\ova}{\Omega_{\text{val}}}
\newcommand{\otr}{\Omega_{\text{tr}}}

\subsection{\rev{Experimental methodology and datasets}}\label{ssec:exp-data-gr}
In the experiments, we evaluate the recovery performance of a tensor completion
result \revn{$\hat{\mathcal{T}}:=\llbracket U^{(1)},\cdots,U^{(k)}\rrbracket$
\revn{with the following error functions restricted on an index set 
$\Omega'$---which contains the revealed entries or the unrevealed entries for test error---of $\mathcal{T}$:} 
(i) the relative error of $\hat{\mathcal{T}}$ 
against $\mathcal{T}$ in the Frobenius norm, and (ii) the root mean squared error (RMSE) of $\hat{\mathcal{T}}$. 
The training and test RMSEs refer to the RMSE on the training set ($\otr$)---the set of revealed entries for the optimization of the model---and the test set respectively.}

We initialize both our proposed methods and other methods with a point $U_0\in\reals^{m_1\times R}\times\reals^{m_2\times R}\times\reals^{m_3\times R}$ where each factor matrix $U_{0}^{(i)}$ is a Gaussian matrix such that $[U_{0}^{(i)}]_{jr}\sim \mathcal{N}(0,1)$.

\rev{The stopping criterion in Algorithm~\ref{algo:altmin} (line~\ref{line:altmin-stoppcrit}) is satisfied if either of the following conditions is met: %
(i) the wall time used for producing the latest iterate is larger than a time budget parameter $T_{\max}$; %
(ii) the progress of the iterate $(U_{t}^{(i)})_{i=1,\ldots,k}$, measured by a heuristic difference function $\Delta_{t}$,  
is smaller than a tolerance parameter $\epsilon$\devv{~(to update its notation if necessary)}. Here we define $\Delta_{t}$ as follows,
\begin{align}\label{eq:stopp-crit-delta}
\Delta_{t} := |E(U_{t};\otr) - E(U_{t-1};\otr)| \quad \text{with} \quad 
E(U;\otr):= \frac{\|P_{\otr}(\llbracket U^{(1)},...,U^{(k)}\rrbracket - \mathcal{T})\|_{\mathrm{F}}}{\|P_{\otr}(\mathcal{T})\|_{\mathrm{F}}}, 
\end{align}
where $E(U;\otr)$ is the relative error restricted on the training set %
$\otr$. %
}

Based on a ground truth tensor 
$\mathcal{T}\in\mathbb{R}^{m_1\times m_2\times m_3}$
and for a fixed sampling rate, we generate $N_{\text{test}}$ training instances $(\Omega_\ell)_{\ell=1,...,N_{\text{test}}}$ under the same sampling rate. For each training instance $(\mathcal{T},\Omega_\ell)$, $N_{\text{init}}$ initial points $(U_{0, (\ell,j)})$, for $j=1,..,N_{\text{init}}$, are generated. Let $\hat{\mathcal{T}}(U_{0,(\ell,j)}; \Omega_\ell)$ denote the solution of the $j$-th test based on the training instance $(\mathcal{T},\Omega_\ell)$ with the initial point $U_{0,(\ell,j)}$, and $E(\hat{\mathcal{T}})$ the error (e.g., relative error, RMSE) of the candidate tensor $\hat{\mathcal{T}}$ w.r.t. $\mathcal{T}$. Then each method is evaluated by the following score
\begin{align*}%
\bar{E}(\hat{\mathcal{T}}) = \frac{1}{N_{\text{test}}N_{\text{init}}}\sum_{\ell=1}^{N_{\text{test}}} \sum_{j=1}^{N_{\text{init}}} E(\hat{\mathcal{T}}(U_{0,(\ell,j)}); \Omega_\ell).
\end{align*}
In all experiments we set $N_{\text{test}}=5$ and $N_{\text{init}}=10$.

In all experiments, the problem-related parameters $(\lambda_i, \lambda_{L})$ in~\eqref{problem}--\eqref{eq:def-Li} 
are generated randomly with the uniform distribution in the log scale. Then the
parameter is chosen among all generated parameter settings through $K$-fold
cross validation (for $K=3$).

\paragraph*{\bf Synthetic data}
\rev{To investigate the effects of graph regularization in the tensor completion model (\ref{problem}), it is tempting to generate a synthetic low rank tensor $\mathcal{T}$ that can be related to structural information of a certain graph.} 
For this purpose, we consider the following tensor model. First, \rev{we generate $k$ Gaussian matrices $(U^{(1)},\dots,U^{(k)})$ of size $m_i\times R$, for a rank parameter $0< R < \min(m_i)$.} Then, we generate a graph Laplacian matrix $\mathcal{L}^{(1)}$ \rev{on the row index set of $U^{(1)}$ %
from the ``Community'' graph model (a type of graphs that contain a number of different closely connected subgraphs or clusters) of GSPbox~\cite{perraudin2014gspbox}}. 
\rev{The structural information of the graph Laplacian is obtained through the eigenvalue value decomposition $\mathcal{L}^{(i)}:= \mathcal{U}\Lambda\mathcal{U}^{\top}$, and is affected to the final tensor model as follows 
\begin{align}
    \label{eq:synth-tensor}
\mathcal{T}:=\llbracket \tilde{U}^{(1)}, U^{(2)}, \dots, U^{(k)}\rrbracket + \mathcal{E} 
\quad \text{for}\quad \tilde{U}^{(1)} := \mathcal{U}\Lambda^{-1}U^{(1)}, 
\end{align}
where $\mathcal{E}$ represents a tensor containing 
additive noise such that
$\mathcal{E}_{\ell_{1}\ell_{2}\ell_{3}}\sim\mathcal{N}(0,\sigma)$,} where
$\sigma$ is set by a given signal-to-noise ratio (SNR). In the experiments, the SNR is set as $20$ dB. 

\rev{The tensor model~\eqref{eq:synth-tensor}, in the same spirit as the synthetic matrix model in~\cite{rao2015collaborative,dong2021riemannian}, has the effect of creating pairwise similar entries (along the first dimension) according to the graph connections. 
In fact, the columns of $\tilde{U}^{(1)}=\mathcal{U}\Lambda^{-1}U^{(1)}$, by construction, \revn{belong} to the eigensubspace of the given graph Laplacian matrix $\mathcal{L}^{(1)}$ with a prescribed order of %
weights in the direction of each eigenvector; here the weights are given by $(\Lambda_{ii}^{-1})_{i=1,\ldots,m_1}$ which make the eigenvectors with the lowest eigenvalues the most significant components of $\tilde{U}^{(1)}$. Therefore, based on graph spectral analysis\comm{~[ref]}, each column of $\tilde{U}^{(1)}$ is a smooth-varying function on the given graph.}
 
\rev{In the experiments third-order tensors are tested with dimensions set as $(m_1,\dots,m_3)=(100,100,100)$, and the rank parameter in \eqref{eq:synth-tensor} is set as $R=10$. 
}

\paragraph*{\bf MovieLens dataset}
The MovieLens-100k dataset\footnote{https://grouplens.org/datasets/movielens/100k/} consists of $100,000$ movie ratings from $943$ users on $1682$ movies during a seven-month period from September 19th, 1997 through April 22nd, 1998. Each movie rating in this dataset has a time stamp. 
Therefore, we obtain a tensor $\mathcal{T}$ of size $943 \times 1682 \times 7$
({\it i.e.}, time period is split into $7$ \revf{parts}).
We randomly select $80\%$ of the known ratings as training set. %

\paragraph*{\bf FIA dataset}
In this experiment, we use the ``rank-deficient spectral FIA
dataset''.\footnote{http://www.models.life.ku.dk/datasets} This dataset consists of results of flow injection analysis on $12$ different chemical substances. The represented tensor is of size $12\text{~(substances)} \times 100 \text{~(wavelengths)} \times 89 \text{~(reaction times)}$.

\subsection{Graph-regularized tensor completion}\label{ssec:exp-recerr-models}

\rev{In this subsection, we evaluate the graph-regularized tensor completion model in comparison to models without graph Laplacian regularization. 
Hereafter, we label the graph Laplacian regularizer of the tensor completion model~\eqref{problem} as %
\greg, which also denotes by abuse the model~\eqref{problem} with $\lambda_{i}\neq 0$ and $\lambda_{L}\neq 0$.} 
The Frobenius norm-based regularizers in~\eqref{problem} is labeled as 
\rev{\freg, which denotes the tensor completion model~\eqref{problem}
when the graph Laplacian regularizer is reduced to zero, i.e., $\lambda_{L}=0$ while the other regularizers are active ($\lambda_{i}\neq 0$).} 
\rev{The unregularized model, i.e.,~\eqref{problem} when all regularization parameters $\lambda_{i}=\lambda_{L}=0$, 
is labeld as %
\noreg.} 
\devv{Besides our proposed AltMin-CG (Algorithm~\ref{alg:Altmin_CG}) and AltMin-ADMM (Algorithm~\ref{algorithm_ADMM}), we also test another alternating minimization method called "AltMin" whose outer loop uses the same procedure as Algorithm~\ref{algo:altmin} but directly solves each subproblem (\ref{prog:sub-lsq}) using backslash in matlab code (i.e., uses an exact linear solver).} 
In the experiments of this subsection, the stopping criterion is controlled by (i) a large enough iteration budget $T_{\max}$; and (ii) a global tolerance parameter $\epsilon$ for~\eqref{eq:stopp-crit-delta}. 

\paragraph*{\bf Experiment on synthetic data} 

\rev{In this experiment, we consider tensors generated from the synthetic model~\eqref{eq:synth-tensor}, which incorporates the graph Laplacian information of a given graph. We conduct tensor completion tests for different sampling rates in $\{0.3\%$, $0.5\%$, $0.7\%, 1\%\}$. 
The rank parameter for all LRTC models is set as $R=10$. 
} 
The average relative errors %
are listed in Table~\ref{table_syn2}. Figure~\ref{fig_syn2_a} shows the histogram with $\mathrm{SR}=0.3\%$. 
We see that in this experiment where AltMin-CG and ADMM are used, the \greg\ model outperforms the other two models.
 
\begin{table}[htbp]
\centering
\small
\caption{Average recovery accuracy (relative error) of the three tensor completion models on synthetic data: \greg\ (with graph Laplacian), \freg\ (without graph Laplacian), \noreg\ (no regularizer).}
\label{table_syn2}
\begin{tabular}{ccccc}
 \toprule
 Sampling Rate       &     \multicolumn{1}{l|}{Algorithm}          & \greg\ & \freg\ & \noreg\ \\ \midrule
 \multirow{2}*{$0.3\%$}                & AltMin-CG & 0.8198  & 1.0083  &  7.1110        \\ \cmidrule(lr){2-5} 
       & ADMM &  0.8246  &  1.0054 &  4.4393        \\ \midrule
   \multirow{2}*{$0.5\%$}              & AltMin-CG &  0.4418  &   0.9201  &   9.9010       \\ \cmidrule(lr){2-5} 
     & ADMM &  0.4486     & 0.9236   &   7.6692          \\  \midrule
  \multirow{2}*{$0.7\%$}               & AltMin-CG &  0.3106  & 0.7561  & 9.9548       \\ \cmidrule(lr){2-5} 
    & ADMM &   0.3000    & 0.7487  & 8.4821    
\\ \midrule
  \multirow{2}*{$1\%$}              & AltMin-CG & 0.1380     & 0.4772  &   13.5740        \\ \cmidrule(lr){2-5} 
      & ADMM & 0.1439    &  0.4513  &  10.9230
\\  
%\hline \hline
\bottomrule
\end{tabular}
\end{table}

 %\begin{figure}[h]
 %\centering
 %\setcounter{subfigure}{0}
 %\subfigure[AltMin-CG]{
 %\begin{minipage}[t]{0.37\linewidth}
 %\centering
 %% 
 %\includegraphics[width=1\textwidth]{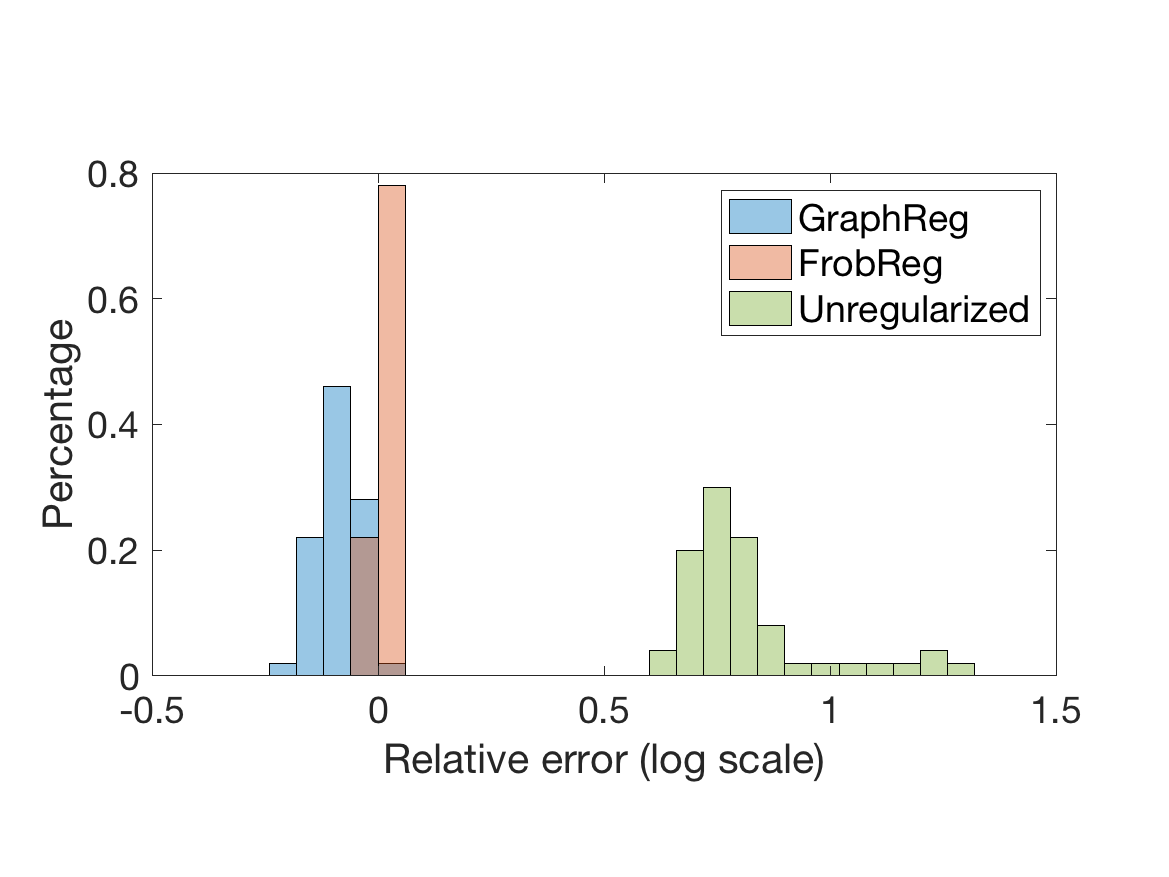}
 %%
 %\end{minipage}%
 %}%
 %\quad\qquad
 %\subfigure[ADMM]{
 %\begin{minipage}[t]{0.37\linewidth}
 %\centering
 %%
 %\includegraphics[width=1\textwidth]{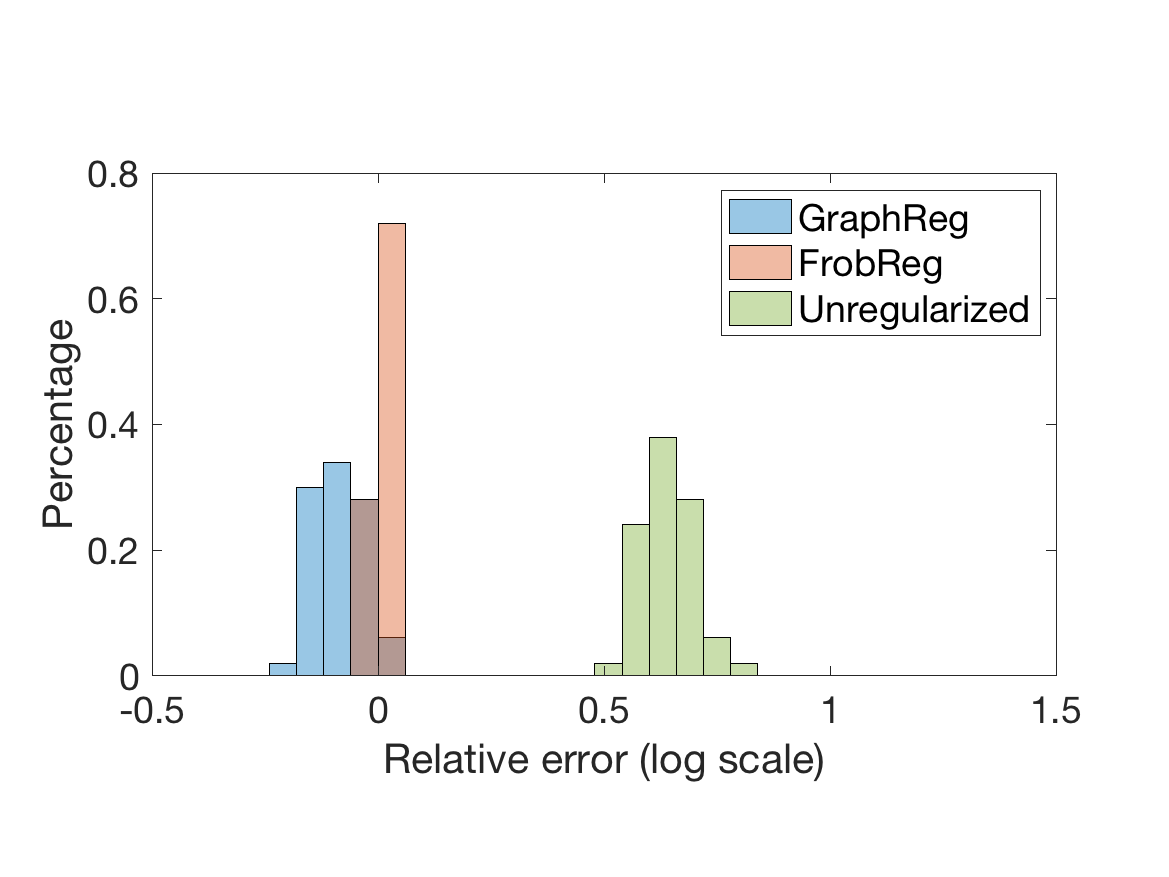}
 %%
 %\end{minipage}%
 %}%
 %\caption{Histogram of recovery errors of the three tensor completion models on synthetic data. The sampling rate is $0.3\%$. 
 %}
 %\label{fig_syn2_a}
 %\end{figure}

\begin{figure}[h]
\centering
\includegraphics[width=0.37\textwidth]{fig61_AltMin-CG.eps}
\quad\qquad
\includegraphics[width=0.37\textwidth]{fig61_AltMin-ADMM.eps}
\caption{Histogram of recovery errors of the three tensor completion models on synthetic data. The sampling rate is $0.3\%$. 
(Left) AltMin-CG; (Right) ADMM. 
}
\label{fig_syn2_a}
\end{figure}

\paragraph*{\bf\revs{Experiment on the MovieLens dataset}} 
Next, we conduct the same experiment on the MovieLens dataset. 
\rev{An essential difference with the previous experiment on synthetic data is that the desired similarity graph for regularizing the tensor completion model is not directly available but requires either a collection of auxiliary graph information or construction from data with a certain graph model}. 

\rev{In the case of the MovieLens dataset, we assume the similarity patterns among the revealed data entries already contains information for constructing a similarity graph on the set of users or the set of items. The reasoning is that for a pair of users that have already given the same (or similar) scores on a same subset of movies, then it is likely that they have similar preferences, and hence they are supposed to be connected for a user-similarity graph.
The similar argument applies to pairs of movies that have very close ratings from users.} 
\rev{Therefore,} we construct a movie-wise similarity graph based on the
data matrix itself (with missing entries). Let $M^{\star}$ denote the MovieLens
data matrix with missing entries. We compute the graph proximity parameters
based on a low-rank approximation of the partially revealed matrix. More
precisely, we use a rank-$r$ approximation of the zero-filled matrix
$M_0:=P_\Omega(\mstar)\in\mathbb{R}^{m\times n}$ as the features for constructing the graph.
Let $(U_0,S_0,V_0)$ denote the $r$-SVD of $M_0$ and let 
$\widetilde{M_0}:= U_0 S_0 V_0^{T}$. 

Next, the computation of the graph edge weight parameters based on the given matrix $M:=\widetilde{M_0}$ 
can be realized using various node proximity methods such as $K$-Nearest Neighbors ($K$-NN) and \revv{$\varepsilon$-graph}\devv{ (removed "NN" according to several references)} models~\cite{chazelle1983improved,belkin2003laplacian,he2004locality,Chen2009}\devv{ (references added; the "Gaussian eps-graph" below is covered by these references)}, which boil down to computing a
certain distance matrix between the rows (resp.\ columns) of $M$. 
Let $Z^{\mathrm{r}}(M)\in\reals^{m\times
m}$ denote the row-wise distance matrix of $M$ defined as 
\rev{$Z_{ij}(M) = \text{dist}( \rowof{M}{i}, \rowof{M}{j} )$, for $i,j\in[m]$, 
where $\text{dist}:\reals^n \times \reals^n\mapsto \reals_+$ is a distance on the
$n$-dimensional vector space}. Subsequently, we build a Gaussian $\varepsilon$-graph by computing the node proximity weights as follows  
\begin{align}
     \label{eq:epsNN-Gaussian-dense}
     (W_\varepsilon(M))_{ij} = \exp\big( -\varepsilon^{-2} Z_{ij}(M) \big) \quad \text{~for~} i,j\in [m]
 \end{align}
where $\varepsilon\in\reals$ is a hyperparameter of the graph model.
Furthermore, a sparse graph adjacency matrix is preferable to a dense one from a computational point of view, as the per-iteration cost of the proposed algorithms (\textit{e.g.}\ Algorithm~\ref{alg:Altmin_CG}) depends
partly on $\text{nnz}(L^{\mathrm{r}})$ and $\text{nnz}(L^{\mathrm{c}})$. For simplicity, we sparsify the graph adjacency matrix
defined in~\eqref{eq:epsNN-Gaussian-dense} with the following thresholding operation 
\begin{align}
     \label{eq:epsNN-Gaussian}
     (W_{\varepsilon,\sigma}(M))_{ij} = \bold{1}_{\geq\sigma}\big( \exp( -\varepsilon^{-2} Z_{ij}(M) )\big) \quad \text{~for~} i,j\in [m], 
 \end{align}
where $\bold{1}_{\geq\sigma}$ is the hard threshold function such that $\bold{1}_{\geq\sigma}(z) = z$ if $z\geq\sigma$ and $0$ otherwise. 
In the graph model~\eqref{eq:epsNN-Gaussian}, parameter $\varepsilon$ is tuned according to the variance of $(Z_{ij})_{i,j=1,..,m}$ and threshold $\sigma$ is chosen according to a preset sparsity level $\mathfrak{s}\ll 1$ for the edge set associated with $W_{\epsilon,\sigma}$ such that $\frac{|\gre(W_{\epsilon,\sigma})|}{m^2}\leq \mathfrak{s}$. 

\revs{The rank parameter for all three LRTC models (\greg, \freg\ and \noreg) is set as $R=10$.} 
The results %
are given in Table \ref{table_real1} and the histogram of these results is presented in Figure \ref{fig_real1}.  
\revv{These results show that the solutions to the graph-regularized model~\eqref{problem} (labeled \greg) and \freg\ model have better recovery performance (in terms of RMSE) than \noreg\ model.}
\revv{The gain of recovery performance induced by the graph learned from the data may
be considered marginal; however, on movie rating data, it is notoriously
difficult to improve\revv{~the RMSE score} much beyond basic methods~\cite{bennett2007netflix}.} %

\begin{table}[htbp]
\centering
\caption{Relative error of the three models on the MovieLens dataset: \greg\ (with graph Laplacian), \freg\ (without graph Laplacian), \noreg\ (no regularizer).}
\label{table_real1}
\begin{tabular}{l|ccc} 
\toprule
\multicolumn{1}{l|}{Algorithm }          & \greg\ & \freg\ & \noreg\         \\ 
\midrule
AltMin-CG    &  0.2233   & 0.2233 &   1.4526     \\
ADMM  & 0.2193 & 0.2234 & 1.1205     \\
\bottomrule
\end{tabular}
\end{table}

\begin{figure}[h]
\centering
\setcounter{subfigure}{0}
\subfigure[AltMin-CG]{
\begin{minipage}[t]{0.37\linewidth}
\centering
\includegraphics[width=1\textwidth]{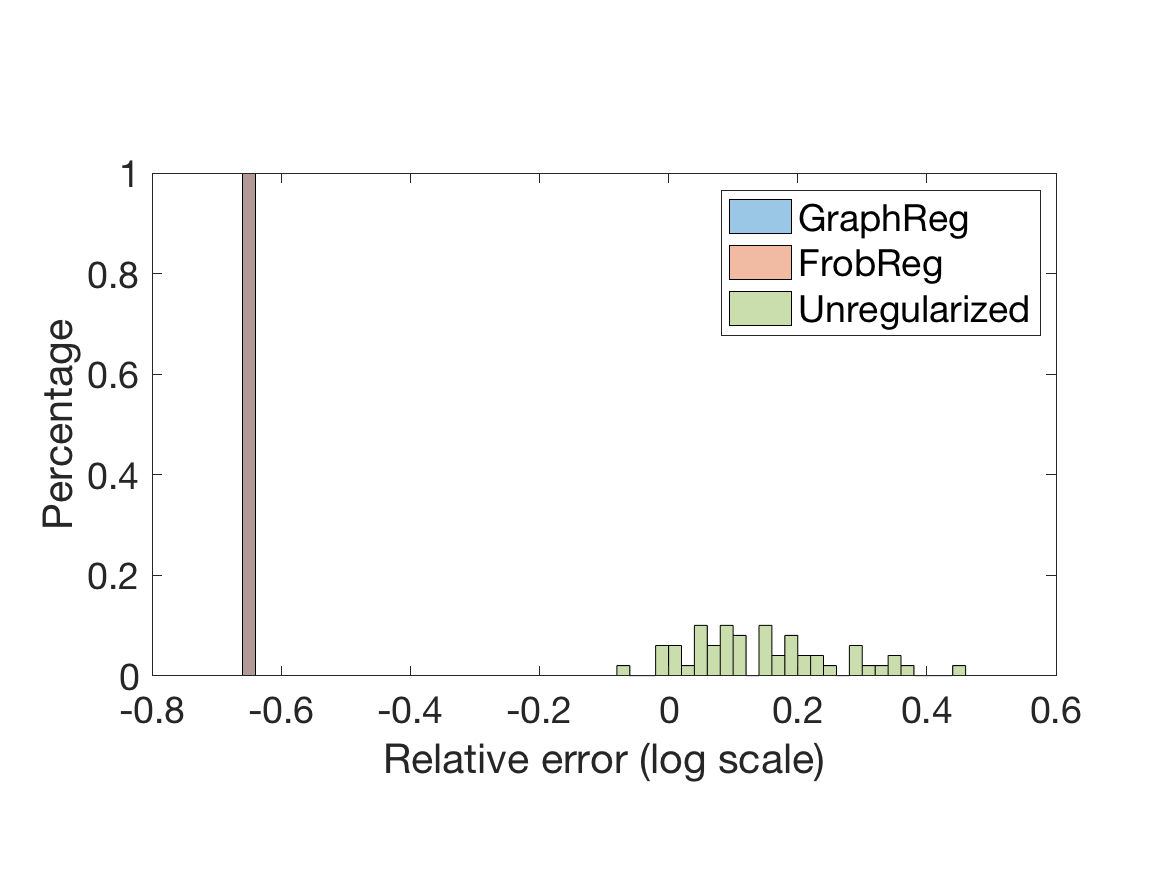}
\end{minipage}%
}%
\qquad 
\subfigure[ADMM]{
\begin{minipage}[t]{0.37\linewidth}
\centering
\includegraphics[width=1\textwidth]{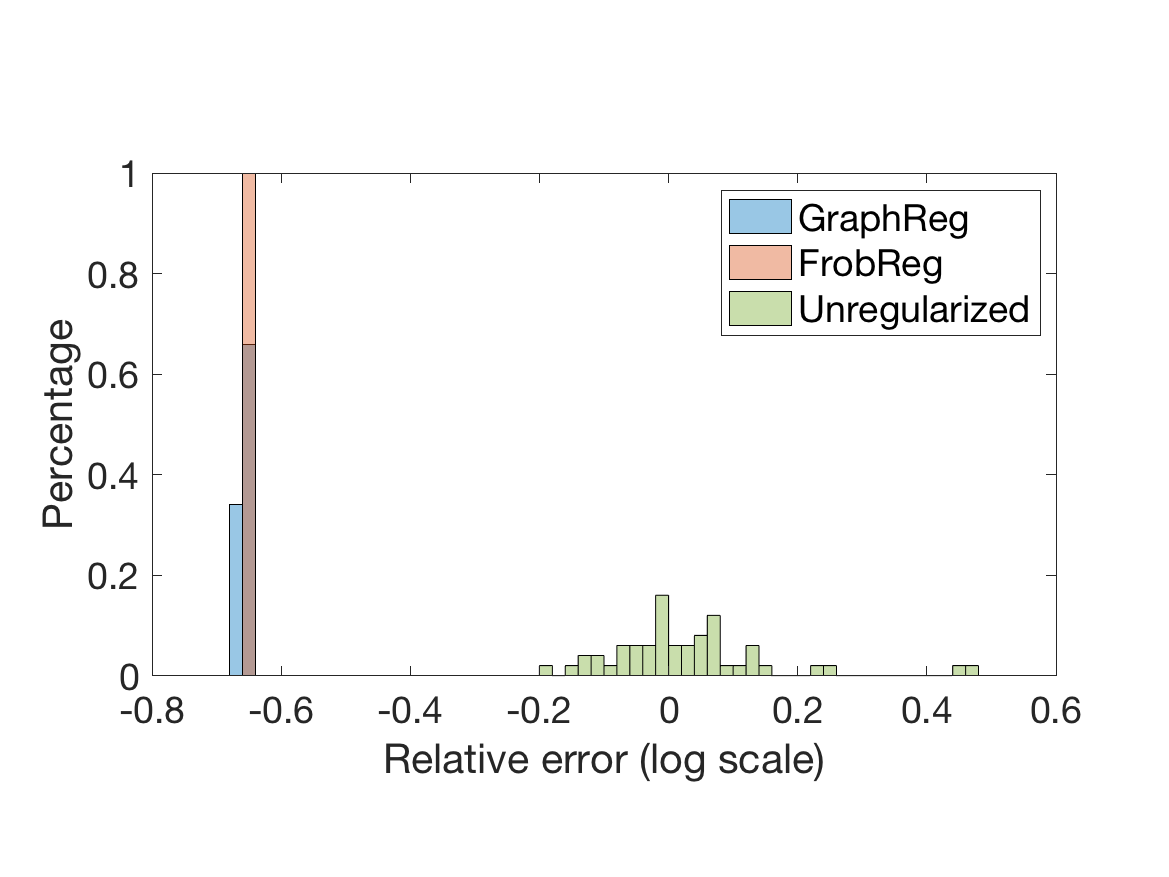}
\end{minipage}%
}%
\caption{Histogram of recovery errors of the three tensor completion models on the MovieLens dataset.}
\label{fig_real1}
\end{figure}

\paragraph*{\bf \revs{Experiment on the FIA dataset}}
For the graph Laplacian regularization, 
we construct the adjacency matrices following the ideas in \cite[Section
4.1]{narita2012tensor}.\footnote{\rev{A comparison between the proposed
methods and the TFAI method of \cite{narita2012tensor} is made; see 
\Cref{ssec:exp-time}.}} For $12$ chemical substances, we build the adjacency matrix with edge weights defined from the inverse of the Euclidean distance of the corresponding pairs of feature vectors. \rev{Along the index set of wavelengths and reaction times, respectively, we construct an adjacency matrix using the chain graph model,  
which models the connectivity of neighboring wavelengths or time stamps, since the observations on the chemical substances generally vary smoothly in the domain of wavelengths and also in time.} %

The sampling rate varies %
among $1\%$, $5\%$ and $7\%$. We present in Table \ref{table_real2} average relative errors after running $50$ random tests, and in Figure \ref{fig_real2_a} the histogram of relative errors under the sampling rate $1\%$. % 
\revv{Similar to the comparative results on synthetic data, these results show that the solutions of the \greg\ model have smaller recovery errors compared to those of the other two models.}

\begin{table}[htpb]
\centering
\caption{Relative error of the three models on the FIA dataset: \greg\ (with graph Laplacian), \freg\ (without graph Laplacian), \noreg\ (no regularizer).}
\label{table_real2}
\begin{tabular}{cc|ccc}
 \toprule
 Sampling rate       &     {Algorithm}       & \greg\ & \freg\ & \noreg\ \\ \midrule
 \multirow{2}*{$1\%$}          & \multicolumn{1}{c}{AltMin-CG} &  0.1012   &  1.1096    &   6.6597         \\ \cmidrule(lr){2-5} 
    & \multicolumn{1}{c}{ADMM} &  0.2396	& 1.1385  & 1.5985		            \\  \midrule
 \multirow{2}*{$5\%$}                & \multicolumn{1}{c}{AltMin-CG} &  0.0146 & 0.3470   & 3.9619      \\ \cmidrule(lr){2-5}  
    & \multicolumn{1}{c}{ADMM} &   0.0209  & 0.2131   & 0.2131   \\ \midrule
 \multirow{2}*{$7\%$}                & \multicolumn{1}{c}{AltMin-CG} &  0.0126  & 0.2087   & 0.3837        \\ \cmidrule(lr){2-5}  
       & \multicolumn{1}{c}{ADMM} & 0.0180 &  0.0572  & 0.0572   \\   \bottomrule
\end{tabular}
\end{table}

\begin{figure}[H]%[htpb] 
\centering
\setcounter{subfigure}{0}
\subfigure[AltMin-CG]{
\begin{minipage}[t]{0.4\linewidth}
\centering
\includegraphics[width=1\textwidth]{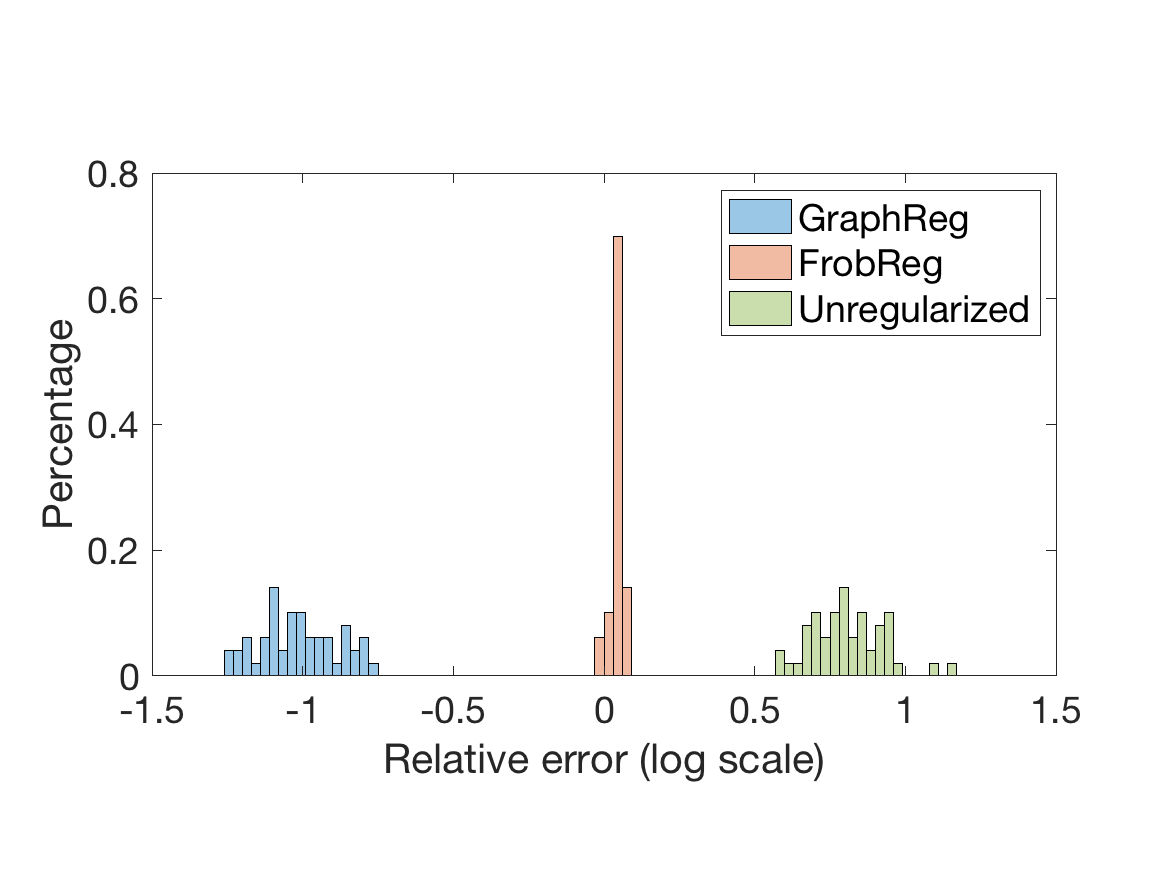}
\end{minipage}%
}%
\qquad 
\subfigure[ADMM]{
\begin{minipage}[t]{0.4\linewidth}
\centering
\includegraphics[width=1\textwidth]{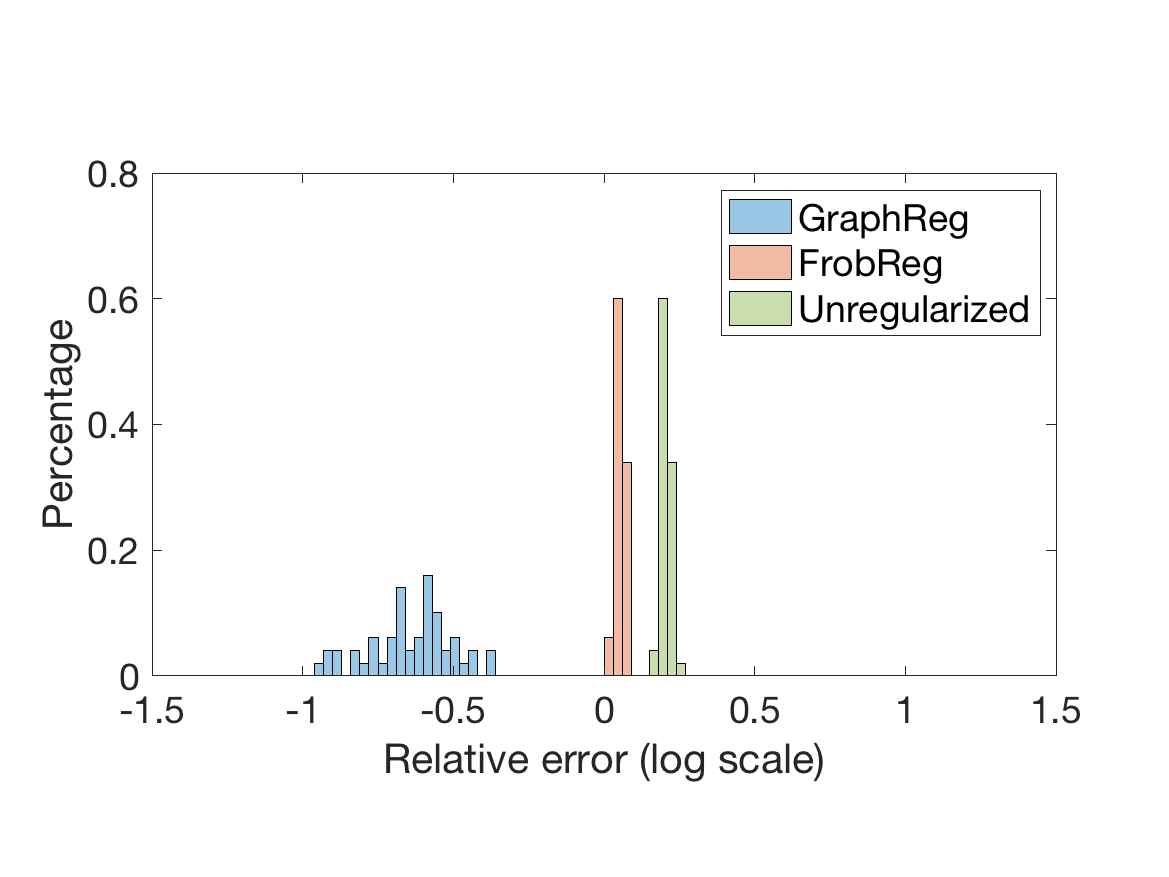}
\end{minipage}%
}%
\caption{Histogram of recovery errors of the three tensor completion models on the FIA dataset. The sampling rate is $1\%$.}
\label{fig_real2_a}
\end{figure}

\paragraph*{\bf Observations and results} 
\rev{
From the results on the real-world datasets, the graph-regularized
model~\eqref{problem} (\greg) shows improvements in the recovery accuracy on
both the MovieLens and the FIA datasets. % 
Especially with the FIA dataset, the improvements by \greg\ is 
increasingly significant when the sampling rate decreases from 7\% to 1\%. % 
A possible explanation is that the graphs constructed for the FIA dataset
are good enough in capturing the entrywise similarities in the (partially
observed) tensor while in the case of MovieLens, the graph constructed (using the
model \eqref{eq:epsNN-Gaussian}) is less adapted to the real-world user/movie
similarities.} 

\rev{Similarly and even more evidently, the results (\Cref{table_syn2}) on synthetic datasets (for which the graph information is given) show that the graph-regularized model~\eqref{problem} %
entails significant improvements over the other two models without
graph regularization, under all sampling rates tested. 
This observation validates the improvements by the \greg\ model for tensor completion %
especially when the proportion of the revealed entries (the sampling rate) is low. 
}

\subsection{Time efficiency evaluations}
\label{ssec:exp-time}

In this subsection, we focus on evaluating the time efficiency of our proposed algorithms under the same experimental settings described in the last subsection. 
In each of the following experiments, iteration information for our proposed algorithms is recorded. At each iteration, the recovery quality of the current iterate is evaluated in terms of the RMSE on test entries.

Besides our proposed algorithms, the state-of-art methods mentioned in \Cref{ssec:priorwork} are also tested using implementations that are publicly available or made available to us: 
(i) INDAFAC, the damped Gauss-Newton method proposed by Tomasi and
Bro~\cite{tomasi2005parafac}; 
(ii) CP-WOPT, a CP decomposition algorithm by Acar et al~\cite{acar2011scalable} 
which is available in the Tensor Toolbox~\cite{bader2012matlab}; 
(iii) \revv{BPTF, a Bayesian probabilistic tensor CPD
algorithm~\cite{xiong2010temporal};} 
(iv) TFAI, an \revv{algorithm for tensor factorization with the within-mode auxiliary information;} 
(v) TNCP, an ADMM algorithm for solving a matrix trace-norm regularized model~\cite{liu2014trace}; and 
(vi) AirCP, an ADMM algorithm for solving the
CP-based tensor completion using auxiliary graph information~\cite{ge2016uncovering}. 
The parameters involved in the models of TFAI, TNCP and AirCP are chosen after cross validation.

\paragraph*{\bf Experiment on synthetic data} 

\revv{Under the same experimental settings as for Table~\ref{table_syn2}, the RMSEs
of the iterates given by the tested methods % 
are shown in Figure~\ref{fig_experiment6_a}.  
The iterative results in the figures are taken from one test
randomly chosen from the repeated tests.} 
\revv{Figure~\ref{fig_experiment6_a} shows the results under the sampling rates 
$0.3\%$ and $0.5\%$, in which we observe that proposed algorithms, AltMin-CG and ADMM, have a better time efficiency than the rest of the tested methods. Detailed observations on these results are given in Section~\ref{ssec:exp-obs}.}

\begin{figure}[htbp]
\centering
\subfigure[Sampling rate $0.3\%$]{
\includegraphics[width=.70\textwidth]{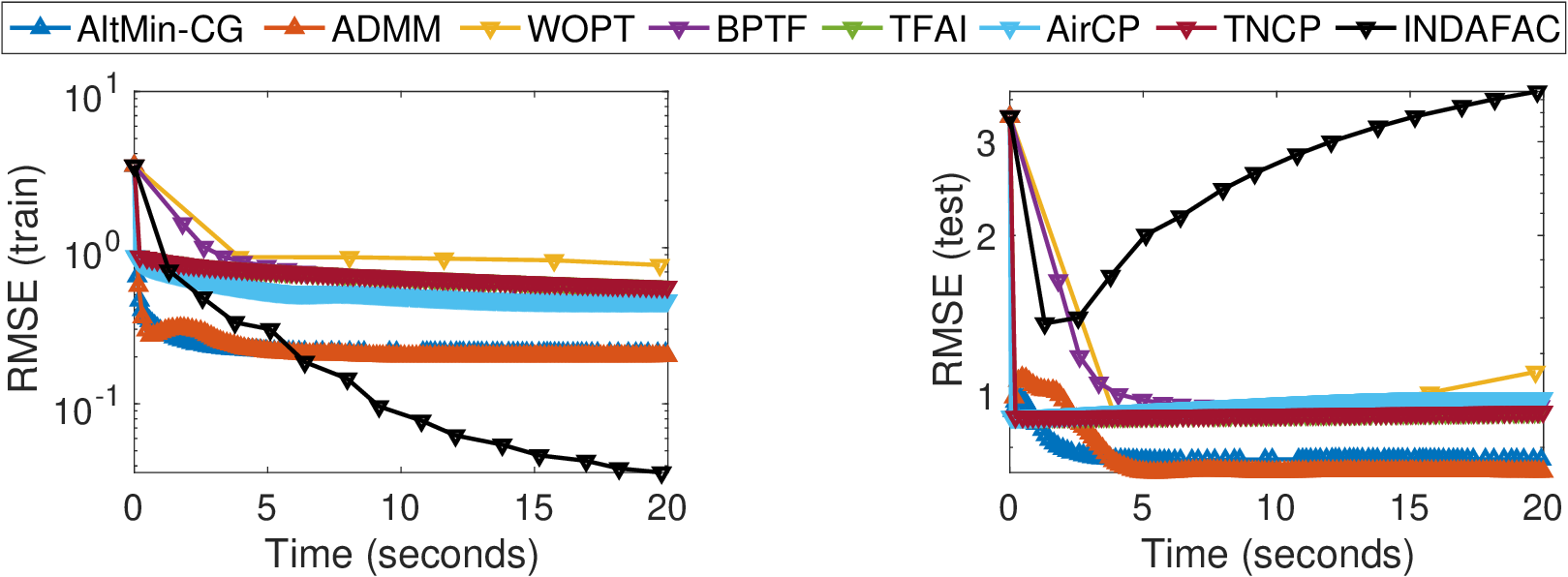}
}
\\
\subfigure[Sampling rate $0.5\%$]{
\includegraphics[width=.70\textwidth]{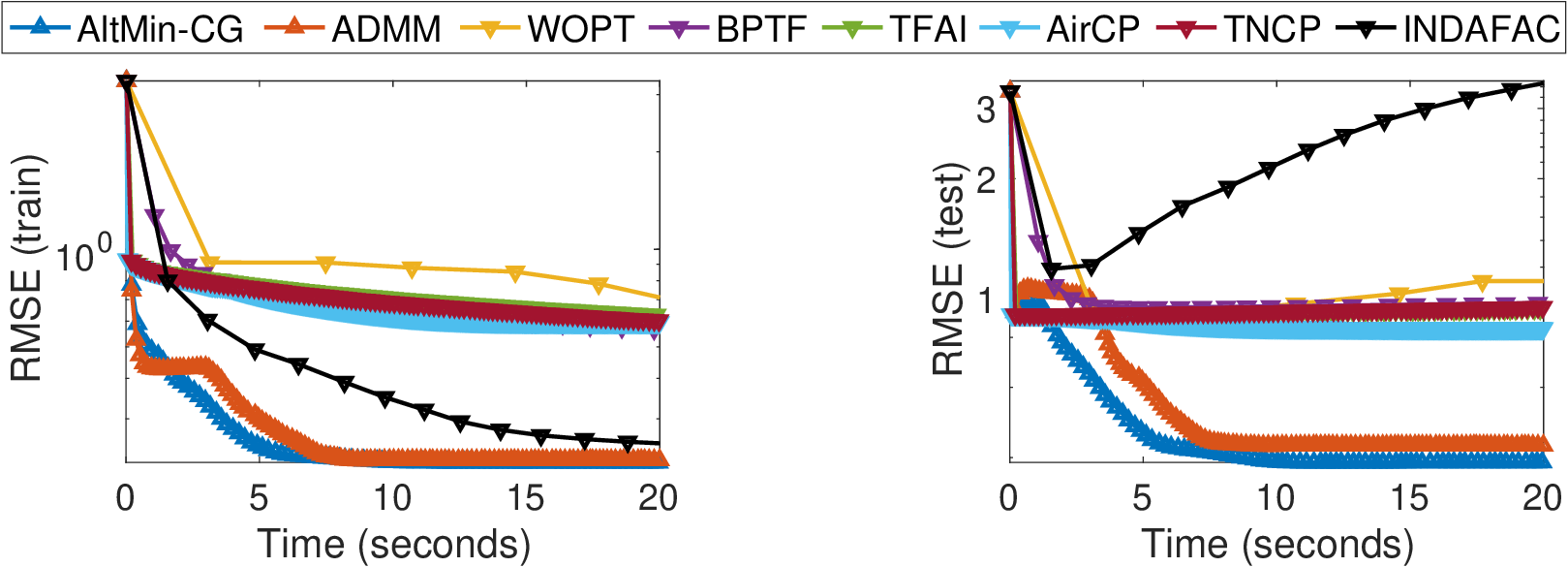}
} 
\caption{Iterative results of the tested algorithms on synthetic data: RMSE on training and test sets by accumulative time per-iteration.  The sampling rates are $0.3\%$ and $0.5\%$.}
\label{fig_experiment6_a}
\end{figure}

\paragraph*{\bf Experiment on MovieLens} 
\revv{Under the same experimental settings on the MovieLens dataset and with the same graph construction method as in the previous subsection, % 
we show the RMSEs of the iterates given by the tested methods in Figure~\ref{fig_experiment7}.
The iterative results in the figure are taken from one test
randomly chosen from the repeated tests.} 
In particular, the labels ``AltMin-CG1'' and ``ADMM1'' represent the results of these algorithms under the graph-agnostic, nuclear norm-based model (NuclReg-TC). 
The results therein shows that the AltMin-CG and ADMM methods remain competitive on this dataset, in particular their time efficiency is comparable to AirCP.

\begin{figure}[!htbp]
\centering
\includegraphics[width=.72\textwidth]{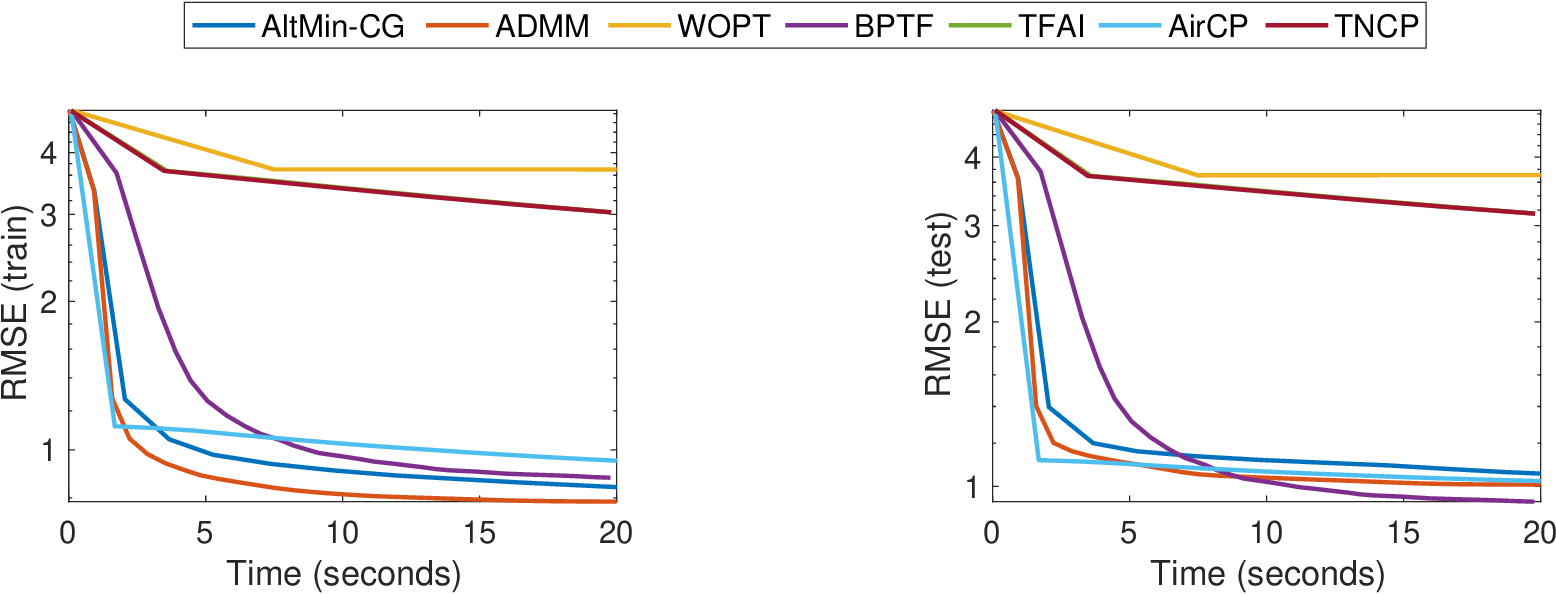}
\caption{Iterative results of the tested algorithms on the MovieLens dataset: RMSE on training and test sets by accumulative time per-iteration.}
\label{fig_experiment7}
\end{figure}

\begin{figure}[!htbp]
\centering
\subfigure[Sampling rate $1.0\%$]{
\includegraphics[width=.72\textwidth]{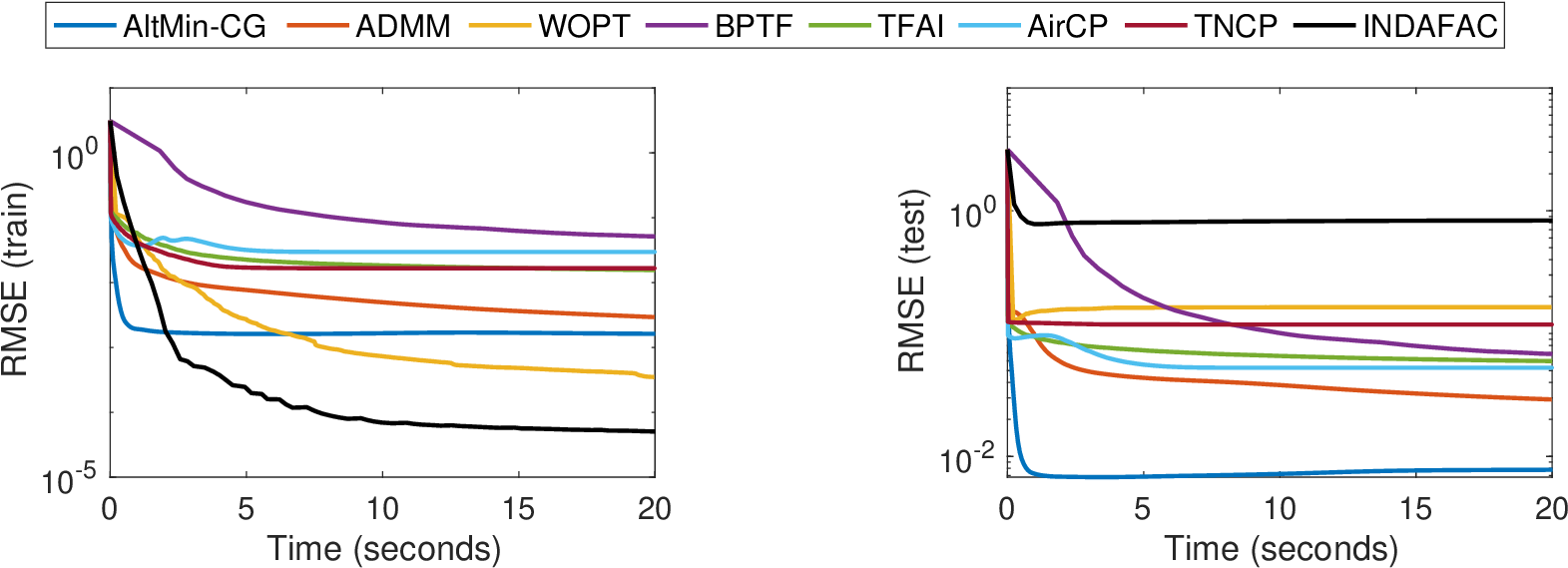}
}
\\
\subfigure[Sampling rate $5.0\%$]{
\includegraphics[width=.72\textwidth]{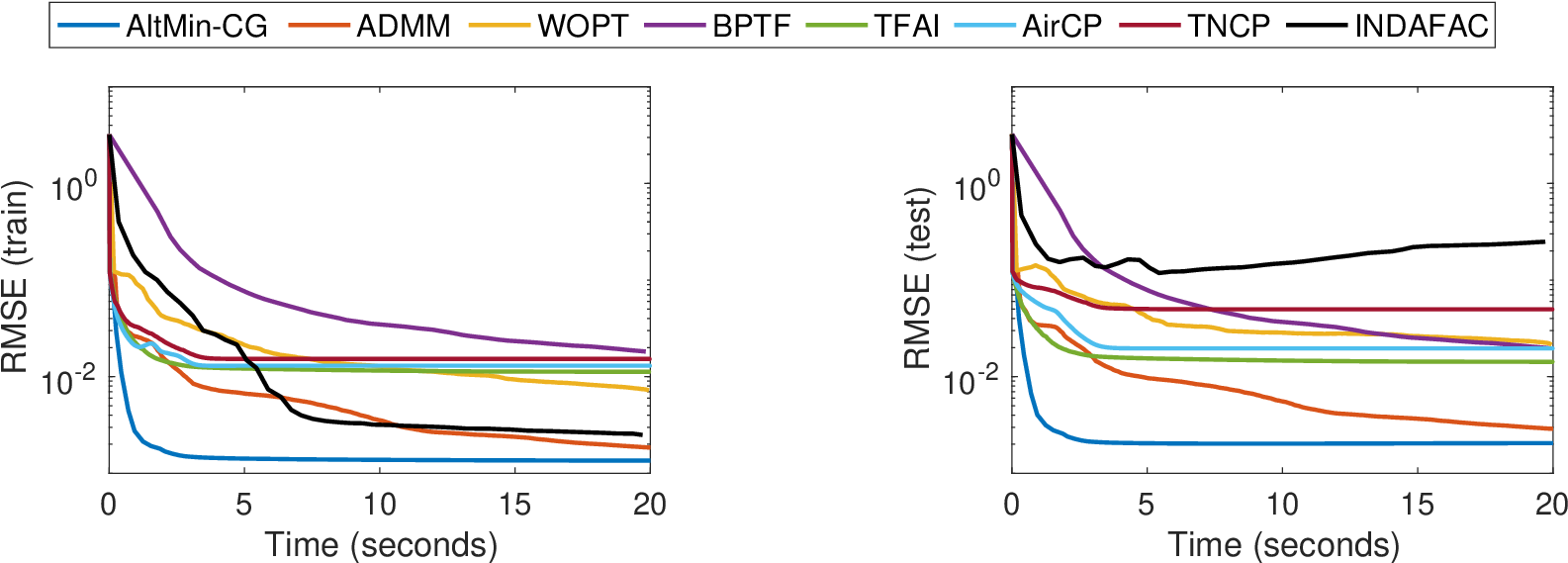}
} 
\caption{Iterative results of the tested algorithms on the FIA dataset: RMSE on training and test sets by accumulative time per-iteration.}
\label{fig_experiment8_a}
\end{figure}

\paragraph*{\bf Experiment on the FIA dataset} 
\revv{Under the same experimental settings on the FIA dataset and with the same graph construction method as in the previous subsection, % 
we show the RMSEs of the iterates given by the tested methods in Figure~\ref{fig_experiment8_a}. 
The iterative results in the figure are taken from one test
randomly chosen from the repeated tests.} 
These results show that the proposed algorithms AltMin-CG and ADMM yield the best recovery performances (in terms of test RMSE).

\paragraph*{\bf Observations and results}\label{ssec:exp-obs} 
From Figure~\ref{fig_experiment6_a}\comm{~and Figure~\ref{fig_experiment6_b}}, we
observe that most of the methods perform comparably in time efficiency. In
particular, the proposed AltMin-CG and ADMM methods, \revf{followed by} AirCP,
TFAI, TNCP, are the fastest in terms of training RMSE. \revf{Also, AltMin-CG and ADMM
achieve the lowest test RMSEs, under low sampling rates.} %
This result is encouraging since it shows that our methods and model for tensor
completion can achieve robust recovery where only a small fraction of data is
available.

For real data in \revf{Figure}~\ref{fig_experiment7}, AirCP, TFAI, AltMin-CG
and ADMM \revf{have comparable time efficiencies and are the fastest among all tested
algorithms in terms of training error}. BPTF is slower than the aforementioned
methods but obtains the lowest test RMSE. In Figure~\ref{fig_experiment8_a}\comm{~and
Figure~\ref{fig_experiment8_b}}, \revf{we also observe} that the proposed AltMin-CG and ADMM methods
outperform most other methods both in efficiency and accuracy with \revf{the given
training data that correspond to low sampling rates.} 

\section{Conclusion}\label{sec:conclusion}
\rev{In this paper, we studied a CP decomposition-based tensor completion model~\eqref{problem} 
that involves graph regularization. 
This model is motivated by two main reasons: (i) the CP decomposition enables a memory-efficient model for
low-rank tensors and (ii) the use of the graph Laplacian-based regularizer
is shown to be effective in many tasks such as semi-supervised learning, image
restoration and also matrix completion, which makes graph regularization a tempting tool for tensor completion. 
For the optimization of this completion model of $k$-th order tensors, we proposed an alternating minimization algorithm and an ADMM algorithm
adapted to the block structure of the underlying problem. 
Within the alternating minimization procedure, we \revn{showed} that each of the $k$ subproblems 
is a quadratic minimization problem, and we \revn{adapted} a linear CG method for
solving these subproblems. An efficient Hessian-vector multiplication \revn{was} 
used for the linear CG subroutine. Besides, we proposed an 
ADMM algorithm, which further
resolves an undesirable coupling effect of graph regularization on the
optimization of the problem. The convergence property of the proposed AltMin
algorithm \revn{was} analysed.}  

\rev{From the results of various numerical experiments, we verified that the
graph-regularized tensor completion model (\ref{problem}) produces improved tensor
completion
results \revn{with respect to} tensor completion models without graph regularization. This observation is especially significant when the sample rate is
small. The proposed algorithms, AltMin-CG (Algorithm~\ref{alg:Altmin_CG}) and ADMM
(Algorithm~\ref{algorithm_ADMM}), have also shown to have superior or comparable
completion results with improved time efficiency at the same time, compared
to state-of-art methods.  
}

\section*{Acknowlegement} 
This work was supported by the Fonds de la Recherche Scientifique -- FNRS and
the Fonds Wetenschappelijk Onderzoek -- Vlaanderen under EOS Project no.\
30468160 (SeLMA -- Structured low-rank matrix/tensor approximation: numerical optimization-based algorithms and applications) \revn{and by the Fonds de la Recherche Scientifique -- FNRS under Grant no.\ T.0001.23}. The second author was supported by the FNRS through a FRIA scholarship. The third author was supported by the Young Elite Scientist Sponsorship Program by CAST.

\bibliographystyle{siam}
\bibliography{Tensor_ref02}

\end{document}